\tikzset{negated/.style={
		decoration={markings,
			mark= at position 0.5 with {
				\node[transform shape] (tempnode) {$\times$};
			}
		},
		postaction={decorate}
	}
}
\newtheorem{theorem}{Theorem}
\newtheorem{corollary}[theorem]{Corollary}
\newtheorem{lemma}[theorem]{Lemma}
\newtheorem{remark}[theorem]{Remark}
\newcommand{\Irr}{\textnormal{Irr}}
\newcommand{\cd}{\textnormal{cd}}
\newcommand{\nl}{\textnormal{nl}}
\newcommand{\lin}{\textnormal{lin}}
\newcommand{\gal}{\textnormal{Gal}}
\title[]{Rational Group Algebras of Camina $p$-groups}
\author{Ram Karan Choudhary}
\address{Indian Institute of Technology, Bhubaneswar, Arugul Campus, Jatni, Khurda-752050, India.}
\email{ramkchoudhary1997@gmail.com}
\author{Sunil Kumar Prajapati$^*$}
\address{Indian Institute of Technology, Bhubaneswar, Arugul Campus, Jatni, Khurda-752050, India.}
\email{skprajapati@iitbbs.ac.in}
\thanks{$^{\textbf{*}}$ Corresponding author.
}
\subjclass[2020]{primary 20C05; secondary 20C15, 20D15}
\keywords{Rational group algebras, Wedderburn decomposition, Camina $p$-groups}
\begin{document}
	
	\begin{abstract}
		In this article, we present a combinatorial formula for the Wedderburn decomposition of rational group algebras of Camina $p$-groups, where $p$ is a prime. We also provide a complete set of primitive central idempotents of rational group algebras of these groups.
	\end{abstract}
	\maketitle

	\section{Introduction} 
	All groups considered in this paper are finite. Throughout this paper, let $p$ denote a prime number. For a finite group $G$, we use $Z(G)$, $G'$ and $\mathbb{Q}G$ to represent its center, derived subgroup and rational group algebra, respectively. Additionally, the notation $\Irr(G)$, $\lin(G)$, $\nl(G)$ and $\cd(G)$ correspond to the sets of irreducible complex characters, linear complex characters, non-linear irreducible complex characters and degrees of irreducible complex representations of $G$, respectively. Given $\chi \in \Irr(G)$, we denote the Schur index of $\chi$ over $\mathbb{Q}$ by $m_{\mathbb{Q}}(\chi)$, and define $\Omega(\chi) = m_{\mathbb{Q}}(\chi) \sum_{\sigma \in \text{Gal}(\mathbb{Q}(\chi) / \mathbb{Q})} \chi^{\sigma}$, where $\mathbb{Q}(\chi)$ represents the field obtained by adjoining to $\mathbb{Q}$ the character values $\{\chi(g) : g \in G\}$. Moreover, we use $M_q(D)$ to denote a full matrix ring of order $q$ over the division ring $D$, $Z(B)$ for the center of an algebraic structure $B$ and $\zeta_d$ for a primitive $d$-th root of unity. This paper focuses on the Wedderburn decomposition of rational group algebras of a particular class of finite $p$-groups $G$, characterized by the property that for each $g \in G \setminus G'$, we have $gG' \subseteq Cl_G(g)$, where $Cl_G(g)$ denotes the conjugacy class of $g$ in $G$. This class of finite $p$-groups, initially introduced by Camina \cite{Camina}, has been extensively studied in~\cite{Dark, Isaac-Lewis, MLL3, Macdonald, Macdonald2, Mann 1} and is known as \textit{Camina $p$-groups}.
	
	The Wedderburn-Artin theorem states that a ring is semisimple if and only if it decomposes into a direct sum of matrix rings over division rings. Furthermore, by the Brauer-Witt theorem (see \cite{Yam}), the Wedderburn components of a rational group algebra are Brauer equivalent to cyclotomic algebras. The study of Wedderburn decomposition of rational group algebras has attracted considerable attention due to its relevance in understanding various algebraic structures (see \cite{Herman, Jes-Rio, Rit-Seh}), and has been investigated using Shoda pair theory in many articles including \cite{BM14, BGO, Jes-Lea-Paq, Jes-Olt-Rio, Olt07}. Perlis and Walker \cite{PW} provided a combinatorial formula for the Wedderburn decomposition of rational group algebras of finite abelian groups based on counting their cyclic subgroups. In \cite{Ram}, we established a similar combinatorial description for the Wedderburn decomposition of rational group algebras of VZ $p$-groups (A group $G$ is said to be VZ group if $\chi(g)=0$ for all $\chi\in \nl(G)$ and for all $g\in G\setminus Z(G)$). This work extends those results by deriving combinatorial formulas for the Wedderburn decomposition of rational group algebras of Camina $p$-groups. Note that Dark and Scoppola \cite{Dark} demonstrated that for a finite Camina $p$-group, the nilpotency class is at most $3$. We prove Theorem \ref{thm:WedderburnCamina}, which provides the Wedderburn decomposition of rational group algebras of Camina $p$-groups.
	
	\begin{theorem}\label{thm:WedderburnCamina}
		Let $G$ be a Camina $p$-group of nilpotency class $r$, where $p$ is a prime. Then we have the following.
		\begin{enumerate}
			\item For $r=2$, we have the following two cases.
			\begin{enumerate}
				\item {\bf Case ($p\neq2$).} In this case, the Wedderburn decomposition of $\mathbb{Q}G$ is given by
				$$\mathbb{Q}G \cong \mathbb{Q} \bigoplus \frac{|G/Z(G)|-1}{p-1}\mathbb{Q}(\zeta_p) \bigoplus \frac{|Z(G)|-1}{p-1}M_{|G/Z(G)|^{\frac{1}{2}}}(\mathbb{Q}(\zeta_p)).$$ 
				
				\item {\bf Case ($p = 2$).} In this case, the Wedderburn decomposition of $\mathbb{Q}G$ is given by
				$$\mathbb{Q}G \cong |G/Z(G)|\mathbb{Q} \bigoplus kM_{\frac{1}{2}|G/Z(G)|^{\frac{1}{2}}}(\mathbb{H}(\mathbb{Q})) \bigoplus (|Z(G)|-k-1) M_{|G/Z(G)|^{\frac{1}{2}}}(\mathbb{Q}),$$ 
				where $k=|\{\chi\in \nl(G) : m_{\mathbb{Q}}(\chi)=2\}|$ and $\mathbb{H}(\mathbb{Q})$ is the standard quaternion algebra over $\mathbb{Q}$.
			\end{enumerate}
			
			\item For $r=3$, the Wedderburn decomposition of $\mathbb{Q}G$ is given by
			$$\mathbb{Q}G \cong \mathbb{Q} \bigoplus \frac{|G/G'|-1}{p-1}\mathbb{Q}(\zeta_p) \bigoplus \frac{|G'/Z(G)|-1}{p-1}M_{|G'/Z(G)|}(\mathbb{Q}(\zeta_p)) \bigoplus \frac{|Z(G)|-1}{p-1} M_{|G/Z(G)|^{\frac{1}{2}}}(\mathbb{Q}(\zeta_p)).$$  
		\end{enumerate}
	\end{theorem}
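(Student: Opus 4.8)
The plan rests on the standard description of the Wedderburn components of a rational group algebra: the simple components of $\mathbb{Q}G$ are in bijection with the Galois conjugacy classes of $\Irr(G)$, and the component attached to (the orbit of) $\chi$ is the central simple $\mathbb{Q}(\chi)$-algebra generated by $\Omega(\chi)$, isomorphic to $M_{\chi(1)/m_{\mathbb{Q}}(\chi)}(D_\chi)$ for a division algebra $D_\chi$ of index $m_{\mathbb{Q}}(\chi)$ over its center $\mathbb{Q}(\chi)$. Hence it suffices to attach to each $\chi\in\Irr(G)$ three data: its degree $\chi(1)$, its field of values $\mathbb{Q}(\chi)$ (which determines both the center of the component and the Galois orbit size $[\mathbb{Q}(\chi):\mathbb{Q}]$), and its Schur index $m_{\mathbb{Q}}(\chi)$ (which determines the matrix size and whether a division algebra occurs). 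I would organize $\Irr(G)$ along the chain $Z(G)\le G'\le G$: linear characters are those trivial on $G'$, and the nonlinear ones split according to whether $Z(G)\subseteq\ker\chi$.

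For $r=2$ I would first record that a class-$2$ Camina $p$-group has $G'=Z(G)$ and is a VZ group: if $\chi$ is nonlinear then $\chi|_{Z(G)}=\chi(1)\lambda$ with $\lambda\neq 1$, while the Camina condition makes $\chi$ constant on each coset $gZ(G)\subseteq gG'\subseteq Cl_G(g)$, forcing $\chi(g)=0$ for $g\notin Z(G)$; thus $\chi(1)=|G/Z(G)|^{1/2}$, and counting shows there are exactly $|Z(G)|-1$ nonlinear characters, one over each nontrivial character of $Z(G)$. The VZ analysis of \cite{Ram} then applies. For $p$ odd, Roquette's theorem gives $m_{\mathbb{Q}}(\chi)=1$, each nonlinear character has field $\mathbb{Q}(\zeta_p)$ and orbit size $p-1$, and the linear characters of the elementary abelian $G/G'$ contribute $\mathbb{Q}$ together with $\tfrac{|G/Z(G)|-1}{p-1}$ copies of $\mathbb{Q}(\zeta_p)$; collecting terms gives case (a). For $p=2$ the nonlinear characters are rational-valued, so each yields $M_{|G/Z(G)|^{1/2}}(\mathbb{Q})$, or $M_{\frac12|G/Z(G)|^{1/2}}(\mathbb{H}(\mathbb{Q}))$ when $m_{\mathbb{Q}}(\chi)=2$, and isolating the $k$ characters of the latter type gives case (b).

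The case $r=3$ is where the real work lies. I would assemble the structure theory of Dark and Scoppola \cite{Dark}: one has $\gamma_3(G)=Z(G)$, the quotient $G/Z(G)$ is itself a class-$2$ Camina group with center $G'/Z(G)$, the relation $|G/G'|=|G'/Z(G)|^2$ holds, all relevant sections are elementary abelian, $p$ is odd, and for $g\in G'\setminus Z(G)$ the conjugacy class satisfies $Cl_G(g)=gZ(G)$. Writing $a=|G/G'|$, $b=|G'/Z(G)|$, $c=|Z(G)|$ (so $a=b^2$), a class count gives $a-1$ classes of size $bc$ off $G'$, $b-1$ classes of size $c$ inside $G'\setminus Z(G)$, and $c$ central classes, hence $a+b+c-2$ irreducible characters. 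I would then split $\Irr(G)$ into four families: the trivial character; the $a-1$ nontrivial linear characters (contributing $\tfrac{a-1}{p-1}$ copies of $\mathbb{Q}(\zeta_p)$); the $b-1$ nonlinear characters inflated from $G/Z(G)$, which by the $r=2$ analysis have degree $a^{1/2}=b$; and the remaining $c-1$ characters $\chi$ with $Z(G)\not\subseteq\ker\chi$. The crucial step is to show each of these last characters vanishes off $Z(G)$: writing $\lambda\neq 1$ for its central character, the first Camina condition forces $\chi(g)=0$ for $g\notin G'$ and the second condition $Cl_G(g)=gZ(G)$ forces $\chi(g)=0$ for $g\in G'\setminus Z(G)$, since in both cases $\chi$ is constant on a coset $gZ(G)$ while $\chi(gz)=\lambda(z)\chi(g)$; hence $\chi(1)^2=|G/Z(G)|$. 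Since $p$ is odd, Roquette's theorem gives $m_{\mathbb{Q}}(\chi)=1$ throughout, and as the nonzero values of the type-$3$ and type-$4$ characters are $p$-th-root-of-unity multiples of their degrees, each has field $\mathbb{Q}(\zeta_p)$ and orbit size $p-1$. Grouping the $b-1$ inflated characters into $\tfrac{b-1}{p-1}$ orbits (components $M_b(\mathbb{Q}(\zeta_p))$) and the $c-1$ remaining ones into $\tfrac{c-1}{p-1}$ orbits (components $M_{|G/Z(G)|^{1/2}}(\mathbb{Q}(\zeta_p))$) yields the stated decomposition, with $\sum_\chi\chi(1)^2=|G|$ as a consistency check.

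I expect the main obstacle to be precisely the $r=3$ structural input, above all the second Camina condition $Cl_G(g)=gZ(G)$ for $g\in G'\setminus Z(G)$ together with $|G/G'|=|G'/Z(G)|^2$ and $\gamma_3(G)=Z(G)$: the vanishing of the genuinely class-$3$ characters off the center, and hence their uniform degree $|G/Z(G)|^{1/2}$, rests entirely on these facts, and they must be extracted carefully from \cite{Dark} rather than assumed. Once they are in place, the character-theoretic bookkeeping and the passage to Schur indices via Roquette's theorem are routine.
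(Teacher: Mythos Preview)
Your proposal is correct and follows essentially the same architecture as the paper: stratify $\Irr(G)$ along $Z(G)\le G'\le G$, reduce $r=2$ to the VZ case via \cite{Ram}, and for $r=3$ split the nonlinear characters according to whether $Z(G)$ lies in the kernel, using that $(G,Z(G))$ is a Camina pair and that $G/Z(G)$ is a class-$2$ Camina group. Two small points: the paper draws the structural input ($\gamma_3(G)=Z(G)$, $(G,Z(G))$ a Camina pair, $|G/G'|=|G'/Z(G)|^2$, elementary abelian sections, $p$ odd) from Macdonald \cite{Macdonald,Macdonald2} rather than Dark--Scoppola; and where you prove vanishing off $Z(G)$ by the central-character argument directly, the paper simply invokes the bijection $\Irr(G\mid Z(G))\leftrightarrow \Irr(Z(G))\setminus\{1\}$ from \cite{SKP} (its Lemma~\ref{lemma:Caminacharacter}) together with a short counting lemma (Lemmas~\ref{lem:galoisCaminaCenter} and~\ref{lemma:CaminaCentergaloisconjugates}) for the Galois orbits. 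Note also that for the vanishing on $G'\setminus Z(G)$ you only need the containment $gZ(G)\subseteq Cl_G(g)$ (i.e.\ the Camina-pair property), not the equality $Cl_G(g)=gZ(G)$ you state.
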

	
   Next, we have Corollary \ref{cor:isoCamina}, which directly follows from Theorem \ref{thm:WedderburnCamina}.
	\begin{corollary}\label{cor:isoCamina}
		Let $G$ and $H$ be two isoclinic, non-abelian Camina $p$-groups of the same order, where $p$ is a prime. Then $\mathbb{Q}G \cong \mathbb{Q}H$.
	\end{corollary}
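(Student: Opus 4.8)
The plan is to obtain the statement directly from Theorem~\ref{thm:WedderburnCamina} by checking that every datum entering the Wedderburn decomposition of $\mathbb{Q}G$ is an invariant of the isoclinism class once $|G|$ is fixed. Recall that isoclinism preserves the nilpotency class, so $G$ and $H$ lie in the same case $r\in\{2,3\}$, and that it furnishes isomorphisms $G/Z(G)\cong H/Z(H)$ and $G'\cong H'$. Combined with $|G|=|H|$, these yield $|G/Z(G)|=|H/Z(H)|$, $|Z(G)|=|Z(H)|$, $|G'|=|H'|$, and hence $|G/G'|=|H/H'|$ and $|G'/Z(G)|=|H'/Z(H)|$. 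So the first step is simply to record these numerical coincidences.

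For $r=3$ (every $p$) and for $r=2$ with $p$ odd this finishes the proof: in each formula the multiplicity of every constituent --- $\mathbb{Q}$, $\mathbb{Q}(\zeta_p)$, $M_{|G'/Z(G)|}(\mathbb{Q}(\zeta_p))$, and $M_{|G/Z(G)|^{1/2}}(\mathbb{Q}(\zeta_p))$ --- is an explicit expression in the indices above and in $p$, so the multisets of simple components of $\mathbb{Q}G$ and $\mathbb{Q}H$ coincide and $\mathbb{Q}G\cong\mathbb{Q}H$.

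I expect the only real obstacle to be the case $r=2$, $p=2$, where the decomposition also involves $k=|\{\chi\in\nl(G):m_{\mathbb{Q}}(\chi)=2\}|$ and where the two candidate nonlinear constituents $M_{\frac12|G/Z(G)|^{1/2}}(\mathbb{H}(\mathbb{Q}))$ and $M_{|G/Z(G)|^{1/2}}(\mathbb{Q})$ are non-isomorphic (division versus split). Here one must prove $k_G=k_H$. The natural route is through Frobenius--Schur indicators: a class-$2$ Camina $2$-group is a VZ group with $Z(G)=G'$ and $G/Z(G)$ elementary abelian, so each nonlinear $\chi$ vanishes off $Z(G)$, every square lies in $Z(G)$, and $\nu(\chi)$ reduces to the Gauss-type sum $\frac{1}{|G/Z(G)|^{1/2}}\sum_{\bar g\in G/Z(G)}\lambda\bigl(\bar q(\bar g)\bigr)$, where $\lambda$ is the central character of $\chi$ and $\bar q\colon G/Z(G)\to Z(G)$ is the squaring map.

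This is where I expect the difficulty to be genuine rather than technical. The map $\bar q$ is a quadratic refinement of the commutator pairing, and isoclinism matches only the pairings, not their refinements; hence $k$ need not be an isoclinism invariant. Indeed $D_8$ and $Q_8$ are isoclinic Camina $2$-groups of equal order with $k=0$ and $k=1$, giving $\mathbb{Q}D_8\cong 4\mathbb{Q}\oplus M_2(\mathbb{Q})\not\cong 4\mathbb{Q}\oplus\mathbb{H}(\mathbb{Q})\cong\mathbb{Q}Q_8$. Thus the clean conclusion $\mathbb{Q}G\cong\mathbb{Q}H$ holds unconditionally for $r=3$ and for $r=2$ with $p$ odd, while for $r=2$, $p=2$ it needs the supplementary hypothesis that $G$ and $H$ have equally many quaternionic nonlinear characters; without it, the corollary should be read with $p$ restricted to odd primes.
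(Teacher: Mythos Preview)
Your argument follows exactly the paper's line: the paper simply notes that isoclinism gives $G'\cong H'$ and $G/Z(G)\cong H/Z(H)$, and then invokes Theorem~\ref{thm:WedderburnCamina}. For $r=3$ and for $r=2$ with $p$ odd your verification that every multiplicity in the formula depends only on these isomorphism types (together with $|G|=|H|$) is just a more explicit rendering of the same idea, and is correct.

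Where you depart from the paper is in the case $r=2$, $p=2$, and here you are right and the paper is not. The decomposition in Theorem~\ref{thm:WedderburnCamina}(1)(b) involves $k=|\{\chi\in\nl(G):m_{\mathbb{Q}}(\chi)=2\}|$, and the paper's proof never explains why $k$ should be an isoclinism invariant. Your counterexample $D_8$ versus $Q_8$ is decisive: both are Camina $2$-groups of class $2$ and order $8$, they are isoclinic, yet $\mathbb{Q}D_8\cong 4\mathbb{Q}\oplus M_2(\mathbb{Q})$ while $\mathbb{Q}Q_8\cong 4\mathbb{Q}\oplus\mathbb{H}(\mathbb{Q})$. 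So Corollary~\ref{cor:isoCamina} is false as stated; indeed Remark~\ref{rem:extraspecial}, which applies the corollary to conclude that the two extraspecial $p$-groups of a given order have isomorphic rational group algebras, is likewise wrong for $p=2$. Your suggested fix --- restrict to odd $p$, or add the hypothesis $k_G=k_H$ in the $2$-group case --- is the correct repair.
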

	In this article, we also present a concise analysis of primitive central idempotents and their associated simple components in the Wedderburn decomposition of rational group algebras of Camina $p$-groups. Specifically, we prove Theorem \ref{thm:pciCamina}.
	\begin{theorem}\label{thm:pciCamina}
		Let $G$ be a Camina $p$-group of nilpotency class $r$, where $p$ is a prime. Let $\chi \in \nl(G)$ with $N = \ker(\chi)$. Then the following hold.
		\begin{enumerate}
			\item For $r=2$, we have the following.
			\begin{enumerate}
				\item $e_{\mathbb{Q}}(\chi)= \epsilon(Z(G), N)$. 
				
				\item If $p$ is odd, then $\mathbb{Q}G\epsilon(Z(G), N) \cong M_{|G/Z(G)|^{\frac{1}{2}}}(\mathbb{Q}(\zeta_{p}))$.
				
				\item If $p=2$, then				
					\begin{equation*}
			\mathbb{Q}G\epsilon(Z(G), N) \cong \begin{cases}
				M_{|G/Z(G)|^{\frac{1}{2}}}(\mathbb{Q}) & \text{when } m_\mathbb{Q}(\chi)=1, \\
			M_{\frac{1}{2}|G/Z(G)|^{\frac{1}{2}}}(\mathbb{H}(\mathbb{Q}))& \text{when}~  m_\mathbb{Q}(\chi)=2.
			\end{cases}
		\end{equation*}	
			\end{enumerate}
			\item For $r=3$, we have the following.
			\begin{enumerate}
				\item If $Z(G) \subseteq N$, then $e_{\mathbb{Q}}(\chi)= \epsilon(G', N)$ and $\mathbb{Q}G\epsilon(G', N) \cong M_{|G'/Z(G)|}(\mathbb{Q}(\zeta_{p}))$.
				\item If $Z(G) \nsubseteq N$, then $e_{\mathbb{Q}}(\chi)= \epsilon(Z(G), N)$ and $\mathbb{Q}G\epsilon(Z(G), N) \cong M_{|G/Z(G)|^{\frac{1}{2}}}(\mathbb{Q}(\zeta_{p}))$.
			\end{enumerate}
		\end{enumerate} 
	\end{theorem}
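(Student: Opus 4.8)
The plan is to sort the nonlinear characters according to how $N=\ker(\chi)$ sits relative to the central series $1\subset Z(G)\subset G'\subset G$, to exhibit the asserted idempotent as $\widehat{N}-\widehat{L}$ for the appropriate normal subgroup $L\in\{Z(G),G'\}$ with $L/N\cong\mathbb{Z}/p$, and then to read off the simple component by matching against the already-established Theorem \ref{thm:WedderburnCamina}. First I would record the structural input from Camina $p$-group theory (citing \cite{Dark,Macdonald,Isaac-Lewis}): for $r=2$ one has $Z(G)=G'$ elementary abelian and $(G,Z(G))$ a Camina pair, so every $\chi\in\nl(G)$ vanishes off $Z(G)$; for $r=3$ the sections $Z(G)$, $G'/Z(G)$, $G/G'$ are elementary abelian, the pairs $(G,G')$, $(G/Z(G),G'/Z(G))$ and $(G,Z(G))$ are Camina pairs, and $|G/G'|=|G'/Z(G)|^2$. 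Since a nonlinear $\chi$ has $G'\nsubseteq N$, the dichotomy ``$Z(G)\subseteq N$ or $Z(G)\nsubseteq N$'' exhausts $\nl(G)$ in the class-$3$ case and is exactly the split in (2)(a)/(2)(b).

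The core lemma I would isolate is the following: if $L\trianglelefteq G$, $N\subseteq L$, $L/N\cong\mathbb{Z}/p$, and $(G/N,L/N)$ is a Camina pair, then $\widehat{N}-\widehat{L}$ is a primitive central idempotent of $\mathbb{Q}G$ equal to $e_{\mathbb{Q}}(\chi)$. Centrality is immediate since $N,L\trianglelefteq G$. For primitivity I pass to $\mathbb{Q}(G/N)\cong\mathbb{Q}G\widehat{N}$, under which $\widehat{N}-\widehat{L}$ becomes $1-\widehat{L/N}$, the sum of those simple components of $\mathbb{Q}(G/N)$ on which the central subgroup $L/N$ acts nontrivially. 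Because $(G/N,L/N)$ is a Camina pair with $L/N\cong\mathbb{Z}/p$, there is exactly one irreducible character of $G/N$ over each of the $p-1$ nontrivial linear characters of $L/N$, all of equal degree; these $p-1$ characters form a single orbit under $\gal(\mathbb{Q}(\zeta_p)/\mathbb{Q})$ (which permutes the faithful linear characters of $\mathbb{Z}/p$ transitively) and share the kernel $N$. Hence $1-\widehat{L/N}$ is a single primitive central idempotent, and since $\chi$ is one of these characters, $e_{\mathbb{Q}}(\chi)=\widehat{N}-\widehat{L}=\epsilon(L,N)$.

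Granting this lemma, part (1)(a) follows with $L=Z(G)$, where $N\subseteq Z(G)$ because $\chi$ vanishes off $Z(G)$ and $Z(G)/N\cong\mathbb{Z}/p$ as $Z(G)$ is elementary abelian; part (2) follows with $L=G'$ when $Z(G)\subseteq N$ (working inside the class-$2$ Camina group $G/Z(G)$, whose center is $G'/Z(G)$, and inflating the class-$2$ result) and with $L=Z(G)$ when $Z(G)\nsubseteq N$. For the simple components I write $\mathbb{Q}G\,e_{\mathbb{Q}}(\chi)\cong M_{\chi(1)/m_{\mathbb{Q}}(\chi)}(D)$ with $Z(D)=\mathbb{Q}(\chi)=\mathbb{Q}(\zeta_p)$ and match it to the unique term of Theorem \ref{thm:WedderburnCamina} with the corresponding matrix degree: $|G/Z(G)|^{1/2}$ for the characters faithful on $Z(G)$, and $|G'/Z(G)|=|G/G'|^{1/2}$ for those inflated from $G/Z(G)$. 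For $p$ odd, Theorem \ref{thm:WedderburnCamina} forces $D=\mathbb{Q}(\zeta_p)$ and $m_{\mathbb{Q}}(\chi)=1$, yielding (1)(b) and (2); for $p=2$ the center is $\mathbb{Q}$ and the only alternatives are $m_{\mathbb{Q}}(\chi)=1$, giving $M_{\chi(1)}(\mathbb{Q})$, or $m_{\mathbb{Q}}(\chi)=2$, giving $M_{\chi(1)/2}(\mathbb{H}(\mathbb{Q}))$, which is (1)(c).

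The main obstacle I anticipate is the primitivity step, namely verifying that the $p-1$ relevant characters of $G/N$ genuinely form a single rational simple component; this requires the sharp ``unique character over each nontrivial central character'' property of Camina pairs together with transitivity of the Galois action, and in the class-$3$ faithful case one must confirm that $(G/N,Z(G)/N)$ is still a Camina pair rather than reducing, as in the class-$2$ case, to an extraspecial quotient $G/N$ where this is transparent. The other delicate point is the identity $|G/G'|=|G'/Z(G)|^2$, needed to reconcile the degree $|G/G'|^{1/2}$ of an inflated character with the matrix degree $|G'/Z(G)|$ in the statement; this can either be extracted from the Camina class-$3$ structure theory or obtained a posteriori by comparing dimensions in Theorem \ref{thm:WedderburnCamina}.
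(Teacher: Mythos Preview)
Your proof is correct, and the overall architecture---reduce the class-$3$ case with $Z(G)\subseteq N$ to the class-$2$ quotient $G/Z(G)$, and handle the faithful-on-$Z(G)$ characters via the Camina pair $(G,Z(G))$, then read off the simple components from Theorem~\ref{thm:WedderburnCamina}---is the same as the paper's. The one genuine difference is in how the identity $e_{\mathbb{Q}}(\chi)=\epsilon(L,N)$ is established. The paper notices that for $\chi=\chi_\mu$ as in Lemma~\ref{lemma:Caminacharacter} one has the elementwise identity $e(\chi_\mu)=e(\mu)$ in $\mathbb{Q}G$ (since $\chi_\mu$ is supported on $Z(G)$ with $\chi_\mu|_{Z(G)}=\chi_\mu(1)\mu$ and $\chi_\mu(1)^2=[G:Z(G)]$), whence $e_{\mathbb{Q}}(\chi)=e_{\mathbb{Q}}(\mu)=\epsilon(Z(G),N)$ by the linear case Lemma~\ref{lemma:pcilin}; your route instead proves primitivity of $\widehat{N}-\widehat{L}$ from scratch by passing to $\mathbb{Q}(G/N)$ and verifying that the $p-1$ irreducible characters nontrivial on $L/N$ form a single Galois orbit. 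Both are short and valid. The paper's shortcut $e(\chi_\mu)=e(\mu)$ is slightly slicker and avoids re-deriving the orbit structure, while your uniform ``core lemma'' makes the Camina-pair hypothesis in the class-$3$ faithful case explicit (and your worry there is easily dispatched: if $(G,Z(G))$ is a Camina pair and $N\subset Z(G)$, then $(G/N,Z(G)/N)$ is a Camina pair by taking images of conjugacy classes).
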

	The organization of the article is as follows. In Section \ref{sec:preliminaries}, we present essential preliminary results that will be used in the subsequent sections to prove our main results. Section \ref{sec:RationalGroupAlgebras} describes rational group algebras of Camina $p$-groups and contains the proof of Theorem \ref{thm:WedderburnCamina}. Finally, Section \ref{sec:pci} outlines the structure of primitive central idempotents in rational group algebras of Camina $p$-groups and provides the proof of Theorem \ref{thm:pciCamina}.

	\section{Preliminaries}\label{sec:preliminaries}
	In this section, we introduce some fundamental concepts and results that will be utilized frequently throughout the article. Let $G$ be a finite group. The nilpotency class of $G$ is defined as the smallest positive integer $n$ such that $G_n \neq 1$ and $G_{n+1} = 1$, where $G_2 = [G, G] = G'$ and $G_{i+1} = [G_i, G]$ for $i > 2$. For a normal subgroup $N$ of $G$, we denote the set of all irreducible complex characters of $G$ whose kernels do not contain $N$ by $\Irr(G|N)$, i.e.,
	$$\Irr(G|N) = \{\chi \in \Irr(G) : N \not\subseteq \ker(\chi)\}.$$
	We now recall some essential results that will be required later. The pair $(G, N)$ is termed a \textit{Camina pair} if for every $g \in G \setminus N$, the coset $gN$ is contained in the conjugacy class $Cl_G(g)$ of $g$. In the special case where $N = G'$, the group $G$ itself is called a \textit{Camina group}. The following lemma provides equivalent conditions for a pair $(G, N)$ to be a Camina pair.
	
	\begin{lemma}\cite[Lemma 3]{Mattarei}\label{lemma:prop of Camina pair}
		Let $N$ be a normal subgroup of $G$, and let $g \in G \setminus N$. Then the following statements are equivalent.
		\begin{enumerate}
			\item $\chi(g) = 0$ for all $\chi \in \Irr(G | N)$.
			\item $gN \subseteq Cl_G(g)$.
		\end{enumerate}
	\end{lemma}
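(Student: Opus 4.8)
The plan is to prove both implications at once by translating each condition into a statement about centralizer orders, using column orthogonality twice. Throughout, write $\bar g = gN$ for the image of $g$ in $G/N$, let $C_G(g)$ denote the centralizer of $g$ in $G$, and recall that $\Irr(G)$ is the disjoint union of $\Irr(G|N)$ and $\{\chi \in \Irr(G) : N \subseteq \ker\chi\}$, the latter being identified with $\Irr(G/N)$.

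The first key step is a vanishing lemma: for every $\chi \in \Irr(G|N)$ and every $x \in G$ one has $\sum_{n \in N}\chi(xn) = 0$. To see this, let $\rho$ be a representation affording $\chi$ on a space $V$ and set $e = \frac{1}{|N|}\sum_{n\in N}\rho(n)$. A direct computation shows that $e$ is an idempotent projecting onto the fixed space $V^N$; since $N$ is normal in $G$, the subspace $V^N$ is $G$-invariant, so irreducibility of $V$ forces $V^N \in \{0, V\}$. As $N \not\subseteq \ker\chi$, we cannot have $V^N = V$, hence $V^N = 0$ and $e = 0$. Consequently $\sum_{n\in N}\rho(xn) = \rho(x)\sum_{n\in N}\rho(n) = |N|\rho(x)e = 0$, and taking traces gives the claim. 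For $\chi$ with $N \subseteq \ker\chi$ the analogous sum is simply $\sum_{n\in N}\chi(xn) = |N|\chi(x)$, since such a $\chi$ is constant on cosets of $N$.

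Next I would compute $|gN \cap Cl_G(g)|$ by summing the class-indicator $\mathbf{1}_{Cl_G(g)}(x) = \frac{1}{|C_G(g)|}\sum_{\chi\in\Irr(G)}\chi(x)\overline{\chi(g)}$ over the coset $gN$. Interchanging the order of summation and applying the vanishing lemma, only the characters trivial on $N$ survive, which yields
$$|gN \cap Cl_G(g)| = \frac{|N|}{|C_G(g)|}\sum_{N\subseteq\ker\chi}|\chi(g)|^2 = \frac{|N|\,|C_{G/N}(\bar g)|}{|C_G(g)|},$$
where the last equality is column orthogonality applied in $G/N$. Since $gN \subseteq Cl_G(g)$ is equivalent to $|gN \cap Cl_G(g)| = |N|$, condition (2) is equivalent to $|C_G(g)| = |C_{G/N}(\bar g)|$.

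Finally I would close the loop with a second application of column orthogonality, now in $G$ itself: splitting $|C_G(g)| = \sum_{\chi\in\Irr(G)}|\chi(g)|^2$ according to whether $N \subseteq \ker\chi$ gives
$$|C_G(g)| = |C_{G/N}(\bar g)| + \sum_{\chi\in\Irr(G|N)}|\chi(g)|^2.$$
Thus $|C_G(g)| = |C_{G/N}(\bar g)|$ holds precisely when $\sum_{\chi\in\Irr(G|N)}|\chi(g)|^2 = 0$, i.e. precisely when $\chi(g) = 0$ for all $\chi\in\Irr(G|N)$, which is condition (1). Chaining the equivalences proves the lemma. The one genuinely delicate point is the vanishing lemma, where normality of $N$ is exactly what is needed to make $V^N$ a $G$-submodule and hence trivial; everything else is bookkeeping with the two orthogonality relations.
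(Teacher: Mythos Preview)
Your argument is correct. The vanishing lemma is standard (it is essentially Clifford theory at its most elementary), and your two applications of column orthogonality---once to compute $|gN\cap Cl_G(g)|$ and once to split $|C_G(g)|$ into contributions from $\Irr(G/N)$ and $\Irr(G|N)$---cleanly establish the chain of equivalences
\[
(2)\ \Longleftrightarrow\ |C_G(g)| = |C_{G/N}(\bar g)|\ \Longleftrightarrow\ \sum_{\chi\in\Irr(G|N)}|\chi(g)|^2 = 0\ \Longleftrightarrow\ (1).
\]

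Note, however, that the paper does not actually prove this lemma: it is quoted verbatim from \cite[Lemma 3]{Mattarei} as a preliminary result, with no argument supplied. So there is no ``paper's own proof'' to compare against. Your proof is the standard one and is almost certainly what Mattarei's original argument looks like; the intermediate equivalence $gN\subseteq Cl_G(g)\Leftrightarrow |C_G(g)|=|C_{G/N}(\bar g)|$ is exactly the formulation that appears throughout the Camina-pair literature.
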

	It is straightforward to verify that if $(G, N)$ is a Camina pair, then $Z(G) \subseteq N \subseteq G'$. Moreover, by Lemma \ref{lemma:prop of Camina pair}, it follows that if $G$ is a Camina group, then for all $g \in G \setminus G'$, we have $\chi(g) = 0$ for every $\chi \in \Irr(G | G')$.	
%
%
%
	We now describe some group-theoretic properties of Camina $p$-groups in Lemmas \ref{lemma:Caminaprop1}, \ref{lemma:Caminaprop2} and \ref{lemma:Caminaprop3}.
	
	\begin{lemma}\cite[Corollary 2.3]{Macdonald}\label{lemma:Caminaprop1}
		Let $G$ be a $p$-group of nilpotency class $r$. If $(G, G_k)$ forms a Camina pair, then $G_i/G_{i+1}$ has exponent $p$ for all $k-1 \leq i \leq r$.
	\end{lemma}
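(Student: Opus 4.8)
The plan is to prove the statement in two stages: first establish the bottom quotient $G_{k-1}/G_k$ has exponent $p$ directly from the Camina condition, and then propagate exponent $p$ upward through $G_k/G_{k+1}, \ldots, G_r$ by a soft induction using the bi-additivity of commutators. Throughout I would use the standard containments $[G_a,G_b]\subseteq G_{a+b}$, the fact that $a\in G_{j-1}$ forces $[a,x]\in G_j$ for all $x\in G$, and the following technical congruence, which I would record once and use in both stages: for $a\in G_{j-1}$ and any $x\in G$,
\[
[a^p,x]\equiv [a,x]^p \pmod{G_{j+1}}.
\]
This follows by an easy induction from $[ab,x]=[a,x]^b[b,x]$, since $[[a,x],a]\in[G_j,G_{j-1}]\subseteq G_{2j-1}\subseteq G_{j+1}$ (here $2j-1\ge j+1$, i.e.\ $j\ge 2$, is used).

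For the base case $i=k-1$, I would fix $g\in G_{k-1}\setminus G_k$. By Lemma~\ref{lemma:prop of Camina pair} the Camina condition $gG_k\subseteq Cl_G(g)$ is equivalent to $G_k\subseteq\{[g,x]:x\in G\}$, and since $g\in G_{k-1}$ forces $[g,x]\in G_k$, the commutator map $x\mapsto[g,x]$ is \emph{onto} $G_k$. Now suppose, for contradiction, that $g^p\notin G_k$; as $g^p\in G_{k-1}$, the same surjectivity applies to $g^p$, so $\{[g^p,x]:x\in G\}=G_k$, which maps onto all of $G_k/G_{k+1}$. On the other hand, by the congruence above (with $j=k$) together with the surjectivity for $g$, the image of $\{[g^p,x]\}$ in $G_k/G_{k+1}$ is exactly the set of $p$-th powers $(G_k/G_{k+1})^p$. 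Hence $(G_k/G_{k+1})^p=G_k/G_{k+1}$, which is impossible for the nontrivial finite $p$-group $G_k/G_{k+1}$ (nontrivial because $k\le r$). Therefore $g^p\in G_k$, and $G_{k-1}/G_k$ has exponent $p$.

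For the inductive step, assume $G_{i-1}/G_i$ has exponent $p$ for some $i$ with $k\le i\le r$. Since $G_i=[G_{i-1},G]$, the quotient $G_i/G_{i+1}$ is generated (as an abelian group) by the cosets of commutators $[h,x]$ with $h\in G_{i-1}$, $x\in G$. Applying the congruence with $j=i$ gives $[h^p,x]\equiv[h,x]^p\pmod{G_{i+1}}$; but $h^p\in G_i$ by the inductive hypothesis, so $[h^p,x]\in[G_i,G]=G_{i+1}$, whence $[h,x]^p\in G_{i+1}$. As these commutators generate $G_i/G_{i+1}$, that quotient has exponent $p$, and the claim follows for all $k-1\le i\le r$.

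The main obstacle is the base case: the inductive propagation is essentially the statement that the Lie bracket on the associated graded is bi-additive, so that $p$-torsion in one layer forces $p$-torsion in the next, and this is routine. By contrast, the exponent-$p$ property of the bottom quotient $G_{k-1}/G_k$ genuinely requires the Camina hypothesis, and the delicate point is to combine the surjectivity of the commutator map with the $p$-th power congruence so as to force the self-contradictory identity $(G_k/G_{k+1})^p=G_k/G_{k+1}$. One must also verify that the congruence holds modulo $G_{k+1}$ (rather than only modulo some larger term), which is precisely where $k\ge 2$ enters.
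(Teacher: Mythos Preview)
Your argument is correct, but there is nothing in the paper to compare it against: Lemma~\ref{lemma:Caminaprop1} is quoted from \cite[Corollary 2.3]{Macdonald} and invoked as a black box, with no proof given in the present paper.

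For the record, your two-stage strategy is sound. The congruence $[a^p,x]\equiv[a,x]^p\pmod{G_{j+1}}$ for $a\in G_{j-1}$ does hold whenever $j\ge2$, since the correction terms lie in $[G_j,G_{j-1}]\subseteq G_{2j-1}\subseteq G_{j+1}$. In the base case you correctly combine the surjectivity $\{[g,x]:x\in G\}=G_k$ (an elementary reformulation of $gG_k\subseteq Cl_G(g)$, not really Lemma~\ref{lemma:prop of Camina pair}) with the congruence to force $(G_k/G_{k+1})^p=G_k/G_{k+1}$ under the assumption $g^p\notin G_k$, which is impossible in a nontrivial finite $p$-group. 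The inductive step is the standard propagation along the lower central series via bi-additivity of the bracket in the associated graded. Two minor remarks: you should state explicitly that $k\ge2$ (automatic, since a Camina pair requires $G_k\subsetneq G$) so that the congruence is available already at $j=k$; and the nontriviality of $G_k/G_{k+1}$ for $k\le r$ relies on the familiar fact that no lower-central quotient below the class vanishes, which you use implicitly.
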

	
	\begin{lemma}\cite[Theorem 5.2]{Macdonald}\label{lemma:Caminaprop2}
		Let $G$ be a Camina $p$-group of nilpotency class $3$, with $|G/G_2| = p^m$ and $|G_2/G_3| = p^n$. Then:
		\begin{enumerate}
			\item $(G, G_3)$ is a Camina pair.
			\item $m = 2n$ and $n$ is even.
		\end{enumerate}
	\end{lemma}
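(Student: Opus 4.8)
The plan is to pass to commutators and the associated graded structure. First I would record the facts forced by nilpotency class $3$: since $G_4=1$ we have $[G_2,G_2]\subseteq G_4=1$, so $G_2$ is abelian, and $[G_3,G]=G_4=1$, so $G_3\subseteq Z(G)$. By Lemma \ref{lemma:Caminaprop1} (applied with $k=2$, as $(G,G_2)$ is a Camina pair) each section $G/G_2$, $G_2/G_3$, $G_3$ is elementary abelian; write $V=G/G_2$, $W=G_2/G_3$, $U=G_3$, with $\dim_{\mathbb{F}_p}V=m$ and $\dim_{\mathbb{F}_p}W=n$. The commutator induces well-defined $\mathbb{F}_p$-bilinear maps $\kappa\colon V\times V\to W$ (alternating) and $\gamma\colon W\times V\to U$, and the Camina hypothesis on $(G,G_2)$ says precisely that $\kappa(\bar g,-)\colon V\to W$ is onto for every $\bar g\neq 0$, because $\{[g,x]:x\in G\}=G_2$ for $g\notin G_2$, reduced modulo $G_3$.

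For Part (1) I would use Lemma \ref{lemma:prop of Camina pair} to reduce the Camina-pair condition for $(G,G_3)$ to the statement that $\{[g,x]:x\in G\}=G_3$ for every $g\in G\setminus G_3$. For $g\notin G_2$ this is immediate from $\{[g,x]:x\}=G_2\supseteq G_3$. For $g\in G_2\setminus G_3$ the map $x\mapsto[g,x]$ is a homomorphism $G\to G_3$ (multiplicativity uses $[g,x]\in G_3\subseteq Z(G)$ and $[G_2,G_2]=1$), trivial on $G_2$, so it descends to $V$, and the claim becomes that $\gamma(\bar g,-)\colon V\to U$ is surjective for every $\bar g\neq 0$. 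Once this holds it follows in passing that $Z(G)=G_3$: an element of $G_2\setminus G_3$ then cannot be central, while $G_3$ is central and $Z(G)\subseteq G_2$ by the Camina property.

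For Part (2) the parity statements fall out cleanly. For any $0\neq\lambda\in W^{*}$ the scalar form $\lambda\circ\kappa$ is alternating on $V$ and is nondegenerate because each $\kappa(\bar g,-)$ is onto $W$; hence $m=\dim V$ is even. Granting that $(G,G_3)=(G,Z(G))$ is a Camina pair, Lemma \ref{lemma:prop of Camina pair} shows every $\chi\in\Irr(G\mid G_3)$ vanishes off $Z(G)$, so $\chi(1)^2=|G:Z(G)|=p^{m+n}$; since such $\chi$ exist (as $G_3\neq1$) and $\chi(1)$ is a power of $p$, $m+n$ is even, and together with $m$ even this forces $n$ even.

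The main obstacle is the equality $m=2n$, which stands on the same footing as the surjectivity of $\gamma(\bar g,-)$ needed in Part (1). Neither follows from the symplectic structure of $\kappa$ alone: abstract alternating maps $\kappa\colon V\times V\to W$ with all rows surjective already exist with $m>2n$ (for $n=2$, $m=6$, take an $\mathbb{F}_p$-plane of alternating forms on $\mathbb{F}_p^{6}$ whose nonzero members are all nondegenerate). What pins the dimensions down is the Jacobi identity linking the two brackets, $\gamma(\kappa(a,b),c)+\gamma(\kappa(b,c),a)+\gamma(\kappa(c,a),b)=0$. I would combine this with the surjectivity of $\kappa$ and the nondegeneracy of the forms $\lambda\circ\kappa$: for each $0\neq\mu\in U^{*}$ the pairing $\mu\circ\gamma$ on $W\times V$ has trivial radical on the $W$-side, and the Jacobi relation makes the induced forms $(a,b)\mapsto\mu\,\gamma(\kappa(a,c),b)$ symmetric of rank $n$, so that balancing these ranks against $\dim V=m$ should yield $m=2n$. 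Making this rank bookkeeping precise, and extracting from it the surjectivity of $\gamma(\bar g,-)$, is the crux of the argument and is exactly the content of the cited Theorem 5.2 of Macdonald.
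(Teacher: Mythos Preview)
The paper does not prove this lemma at all: it is quoted verbatim as \cite[Theorem 5.2]{Macdonald} and used as an input throughout Sections~\ref{sec:RationalGroupAlgebras} and~\ref{sec:pci}. There is therefore no proof in the paper against which to compare your proposal.

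As for the proposal itself, your framework is the standard one underlying Macdonald's argument: passing to the associated graded Lie ring, encoding the Camina hypothesis as surjectivity of $\kappa(\bar g,-)$, and reading off the parity of $m$ from a nondegenerate alternating form $\lambda\circ\kappa$. Your reduction of Part~(1) to the surjectivity of $\gamma(\bar g,-)\colon V\to U$ is correct, as is the observation that $n$ even then follows from $\chi(1)^2=[G:Z(G)]$ once Part~(1) is in hand. You are candid that the substantive content---both the surjectivity of $\gamma(\bar g,-)$ and the equality $m=2n$---is not something you actually establish, and that it ``is exactly the content of the cited Theorem~5.2 of Macdonald.'' So what you have written is an accurate reduction of the statement to its genuine core, together with a heuristic indication (via the Jacobi identity and rank counting) of why one should expect that core to hold; it is not an independent proof, and you say as much. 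Since the paper likewise treats the result as a citation rather than proving it, this is entirely in keeping with how the lemma is used.
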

	
	\begin{lemma}\cite[Corollary 5.3]{Macdonald}\label{lemma:Caminaprop3}
		If $G$ is a Camina $p$-group of nilpotency class $3$, then $Z_2(G) = G_2$ and $Z(G) = G_3$, where $Z_2(G)/Z(G) = Z(G/Z(G))$.
	\end{lemma}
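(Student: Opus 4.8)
The plan is to establish the two equalities $Z(G)=G_3$ and $Z_2(G)=G_2$ separately, relying on the general behaviour of the lower and upper central series of a nilpotent group together with the Camina pair property recorded in Lemma~\ref{lemma:Caminaprop2}.

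First I would prove $Z(G)=G_3$. Since $G$ has nilpotency class $3$, we have $[G_3,G]=G_4=1$, so $G_3$ is central and $G_3\subseteq Z(G)$ for free. For the reverse inclusion I would invoke Lemma~\ref{lemma:Caminaprop2}(1), which asserts that $(G,G_3)$ is a Camina pair; by the observation following Lemma~\ref{lemma:prop of Camina pair}, every Camina pair $(G,N)$ satisfies $Z(G)\subseteq N$, whence $Z(G)\subseteq G_3$. Combining the two inclusions gives $Z(G)=G_3$.

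With this in hand, proving $Z_2(G)=G_2$ reduces to computing the center of $G/Z(G)=G/G_3$, since by definition $Z_2(G)/G_3=Z(G/G_3)$. The quotient $G/G_3$ has nilpotency class $2$: indeed $(G/G_3)_3=1$, while its derived subgroup $(G/G_3)'=G_2/G_3$ is nontrivial because $G_2\supsetneq G_3$ (otherwise $G_2=G_3$ would force $G_3=[G_3,G]=G_4=1$, contradicting that the class is exactly $3$). Consequently $(G/G_3)'=G_2/G_3\subseteq Z(G/G_3)$, which already yields the inclusion $G_2\subseteq Z_2(G)$.

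The crux is the reverse inclusion $Z(G/G_3)\subseteq G_2/G_3$, and this is where the Camina hypothesis on $G$ (with $N=G'=G_2$) does the real work. Suppose $xG_3\in Z(G/G_3)$; then $[x,g]\in G_3$ for every $g\in G$, so each conjugate $g^{-1}xg=x[x,g]$ lies in $xG_3$, that is, $Cl_G(x)\subseteq xG_3$. If $x\notin G'$, the Camina property forces $xG'\subseteq Cl_G(x)$, and hence $xG'\subseteq xG_3$, i.e.\ $G'\subseteq G_3$, contradicting $G_2\supsetneq G_3$. This contradiction shows $x\in G'=G_2$, so $Z(G/G_3)\subseteq G_2/G_3$ and therefore $Z_2(G)=G_2$. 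I expect this final step to be the main obstacle: one must translate the centrality condition modulo $G_3$ into a constraint on the conjugacy class of $x$ and then play it against the Camina conjugacy structure to exclude elements lying outside $G'$.
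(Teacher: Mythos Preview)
Your argument is correct. The equality $Z(G)=G_3$ follows exactly as you say from the general inclusion $G_3\subseteq Z(G)$ in a class-$3$ group together with the Camina pair $(G,G_3)$ supplied by Lemma~\ref{lemma:Caminaprop2}(1). For $Z_2(G)=G_2$, the forward inclusion is the standard fact that the derived subgroup of a class-$2$ group is central, and your reverse inclusion is a clean use of the Camina conjugacy condition: an element $x\notin G'$ with $Cl_G(x)\subseteq xG_3$ would force $G'\subseteq G_3$, which is impossible.

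There is, however, nothing in the paper to compare your approach against: the lemma is quoted without proof as \cite[Corollary~5.3]{Macdonald}. So your write-up is not competing with an argument given here but rather supplying a self-contained justification of a cited result. In that sense it is a welcome addition, and the route you take---reducing to the Camina pair $(G,G_3)$ for the first equality and then exploiting the defining Camina pair $(G,G')$ for the second---is the natural one and matches in spirit how Macdonald derives it.
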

	
	Lewis \cite{MLL3} initiated the study of groups for which $(G, Z(G))$ forms a Camina pair and established that such a group must be a $p$-group for some prime $p$. The following lemma describes a relationship between $\Irr(G | Z(G))$ and $\Irr(Z(G))$ when $(G, Z(G))$ is a Camina pair.
	
	\begin{lemma}\cite[Lemma 3.3]{SKP}\label{lemma:Caminacharacter}
		Let $G$ be a finite group, and $(G, Z(G))$ be a Camina pair. Then there exists a bijection between the sets $\Irr(G | Z(G))$ and $\Irr(Z(G)) \setminus \{1_{Z(G)}\}$, where $1_{Z(G)}$ is the trivial character of $Z(G)$. For $\mu \in \Irr(Z(G)) \setminus \{1_{Z(G)}\}$, the corresponding character $\chi_\mu \in \nl(G)$ is given by:
		\begin{equation}\label{Caminacharacter}
			\chi_\mu(g) = \begin{cases}
				|G/Z(G)|^{1/2} \mu(g) & \text{if } g \in Z(G), \\
				0 & \text{otherwise}.
			\end{cases}
		\end{equation}
	\end{lemma}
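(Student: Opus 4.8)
The plan is to analyze the restriction of characters in $\Irr(G \mid Z(G))$ to the central subgroup $Z(G)$ and then finish by a dimension count. First I would take any $\chi \in \Irr(G \mid Z(G))$ afforded by a representation $\rho$. Since $Z(G)$ is central, Schur's lemma forces $\rho(z)$ to be a scalar for each $z \in Z(G)$, so $\chi|_{Z(G)} = \chi(1)\,\lambda_\chi$ for a uniquely determined linear character $\lambda_\chi \in \Irr(Z(G))$. Because $Z(G) \not\subseteq \ker(\chi)$, this scalar cannot be trivial on all of $Z(G)$, hence $\lambda_\chi \neq 1_{Z(G)}$. This produces the candidate assignment $\chi \mapsto \lambda_\chi$ into $\Irr(Z(G)) \setminus \{1_{Z(G)}\}$.

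Next I would pin down the values of $\chi$. By Lemma~\ref{lemma:prop of Camina pair} applied to the Camina pair $(G, Z(G))$, every $\chi \in \Irr(G \mid Z(G))$ vanishes on $G \setminus Z(G)$, so $\chi$ is supported on $Z(G)$, where it equals $\chi(1)\lambda_\chi$. Feeding this into the normalization $\langle \chi, \chi \rangle = 1$ and using $|\lambda_\chi(z)| = 1$ gives
$$1 = \frac{1}{|G|}\sum_{z \in Z(G)} \chi(1)^2\, |\lambda_\chi(z)|^2 = \frac{\chi(1)^2\,|Z(G)|}{|G|},$$
so $\chi(1) = |G/Z(G)|^{1/2}$. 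Writing $\mu = \lambda_\chi$, this yields exactly the claimed formula for $\chi_\mu$; and since $|G/Z(G)| > 1$ the degree exceeds $1$, confirming $\chi_\mu \in \nl(G)$.

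It remains to show that $\chi \mapsto \lambda_\chi$ is a bijection. Injectivity is immediate from the previous step, since the values of $\chi$ are completely determined by $\lambda_\chi$ (they are $|G/Z(G)|^{1/2}\lambda_\chi$ on $Z(G)$ and $0$ elsewhere), so distinct characters yield distinct $\lambda_\chi$. For surjectivity I would count. The characters with $Z(G) \subseteq \ker$ are precisely those inflated from $\Irr(G/Z(G))$, contributing $\sum \chi(1)^2 = |G/Z(G)|$ to $|G| = \sum_{\chi \in \Irr(G)} \chi(1)^2$; hence $\sum_{\chi \in \Irr(G \mid Z(G))} \chi(1)^2 = |G| - |G/Z(G)|$. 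Since every summand equals $|G/Z(G)|$, we get $|\Irr(G \mid Z(G))| = |Z(G)| - 1$, which matches $|\Irr(Z(G)) \setminus \{1_{Z(G)}\}| = |Z(G)| - 1$ as $Z(G)$ is abelian. An injection between finite sets of equal cardinality is a bijection, completing the argument.

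The main obstacle I anticipate is not any single deep step but rather assembling the pieces in the correct order: the degree computation depends on first extracting the vanishing property from the Camina hypothesis via Lemma~\ref{lemma:prop of Camina pair}, and the surjectivity is cleanest through the counting identity rather than an explicit construction of $\chi_\mu$ from $\mu$ (though Clifford theory offers an alternative route, exhibiting an irreducible constituent of $\mu^{G}$ that lies over $\mu$).
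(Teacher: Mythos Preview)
Your proof is correct. Each step is sound: Schur's lemma gives the central character $\lambda_\chi$, Lemma~\ref{lemma:prop of Camina pair} supplies the vanishing off $Z(G)$, the inner-product normalization pins down the degree, and the counting argument cleanly establishes the bijection.

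Note, however, that the paper itself does not prove this lemma: it is quoted verbatim from \cite[Lemma~3.3]{SKP} without argument, so there is no ``paper's own proof'' to compare against. Your write-up therefore supplies a self-contained justification that the paper omits. The approach you take (central character plus vanishing plus dimension count) is the standard one and is essentially what one finds in the cited source, so there is no methodological divergence to discuss.
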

	
	Next, we recall some results regarding the Wedderburn decomposition of rational group algebras associated with some classes of finite groups that will be used in subsequent sections to prove our main results. In \cite{PW}, Perlis and Walker studied the group ring of a finite abelian group $G$ over the field of rational numbers and proved the following result.
	\begin{lemma}[Perlis-Walker Theorem]\label{Perlis-walker}
		Let $G$ be a finite abelian group of exponent $m$. Then the Wedderburn decomposition of $\mathbb{Q}G$ is given by
		\[\mathbb{Q}G \cong \bigoplus_{d|m} a_d \mathbb{Q}(\zeta_d),\]
		where $a_d$ is equal to the number of cyclic subgroups of $G$ of order $d$.
	\end{lemma}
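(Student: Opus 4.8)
The plan is to identify each simple Wedderburn component of $\mathbb{Q}G$ with an orbit of the Galois group $\gal(\mathbb{Q}(\zeta_m)/\mathbb{Q})$ acting on $\Irr(G)$, and then to match those orbits bijectively with the cyclic subgroups of $G$.

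First I would set up the algebraic skeleton. Since $G$ is abelian and $\mathrm{char}(\mathbb{Q})=0$, Maschke's theorem makes $\mathbb{Q}G$ a commutative semisimple algebra, so by Wedderburn-Artin it is a finite direct sum of number fields. Each $\chi\in\Irr(G)$ is linear, and its image $\chi(G)$ is the cyclic group $\langle\zeta_d\rangle$, where $d$ is the order of $\chi$ in the character group $\widehat{G}$; hence $\mathbb{Q}(\chi)=\mathbb{Q}(\zeta_d)$. The group $\mathcal{G}=\gal(\mathbb{Q}(\zeta_m)/\mathbb{Q})$ acts on $\Irr(G)$ by $\chi^{\sigma}(g)=\sigma(\chi(g))$, and the standard correspondence between rational simple components and $\mathcal{G}$-orbits on $\Irr(G)$ shows that the orbit of $\chi$ contributes exactly one copy of $\mathbb{Q}(\chi)=\mathbb{Q}(\zeta_d)$ (the Schur index is $1$ because $\mathbb{Q}G$ is commutative, so every component is a field). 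Consequently $a_d$, the multiplicity of $\mathbb{Q}(\zeta_d)$, equals the number of $\mathcal{G}$-orbits consisting of characters of order $d$.

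Next I would carry out the combinatorial count. Identifying $\mathcal{G}$ with $(\mathbb{Z}/m\mathbb{Z})^{\times}$, the automorphism $\sigma_k$ sends $\chi\mapsto\chi^{k}$; restricted to characters of order $d$ this is the action of $(\mathbb{Z}/d\mathbb{Z})^{\times}$, and the orbit of such a $\chi$ is precisely $\{\chi^{k}:\gcd(k,d)=1\}$, that is, the set of generators of the cyclic subgroup $\langle\chi\rangle\le\widehat{G}$. Each such orbit therefore has exactly $\varphi(d)$ elements, and distinct cyclic subgroups yield disjoint orbits, so the $\mathcal{G}$-orbits of order-$d$ characters are in bijection with the cyclic subgroups of $\widehat{G}$ of order $d$; thus $a_d$ equals that number. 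Finally, since $G$ is a finite abelian group we have $\widehat{G}\cong G$, whence the number of cyclic subgroups of order $d$ in $\widehat{G}$ equals the number in $G$, which is the quantity in the statement.

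I expect the main obstacle to be the first step: rigorously justifying that the rational simple components correspond to $\mathcal{G}$-orbits on $\Irr(G)$ with the claimed fields. This is where one invokes Galois descent, or equivalently the factorization $x^{n}-1=\prod_{d\mid n}\Phi_{d}(x)$ into irreducible cyclotomic polynomials together with the isomorphism $\mathbb{Q}(G\times H)\cong\mathbb{Q}G\otimes_{\mathbb{Q}}\mathbb{Q}H$ in the reduction to cyclic factors. The remaining counting is elementary once this orbit-to-subgroup dictionary is in place; the only delicate point is verifying that the orbit of an order-$d$ character is exactly the generating set of a cyclic subgroup of order $d$, which follows from the fact that $\chi^{k}$ has order $d$ if and only if $\gcd(k,d)=1$.
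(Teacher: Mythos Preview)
The paper does not supply a proof of this lemma; it is quoted verbatim from Perlis and Walker \cite{PW} as a known preliminary result. Your argument is a correct and standard proof: decomposing $\mathbb{Q}G$ via Galois orbits on $\Irr(G)$, identifying each orbit of an order-$d$ character with the generator set of a cyclic subgroup of $\widehat{G}$ of order $d$, and then passing to $G$ via $\widehat{G}\cong G$. There is nothing in the paper to compare it against.
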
 
	
\noindent In \cite{Ram}, we formulate the computation of the Wedderburn decomposition of rational group algebra of a VZ $p$-group $G$ solely based on computing the number of cyclic subgroups of $G/G'$, $Z(G)$ and $Z(G)/G'$, which is similar to the Perlis-Walker theorem for an abelian group.
		\begin{lemma}\cite[Theorem 1]{Ram}\label{lemma:Wedderburn VZ}
		Let $G$ be a finite VZ $p$-group, where $p$ is an odd prime. Let $m_1$, $m_2$ and $m_3$ denote the exponents of $G/G'$, $Z(G)$ and $Z(G)/G'$, respectively. Then the Wedderburn decomposition of $\mathbb{Q}G$ is given by
		$$\mathbb{Q}G \cong \bigoplus_{d_1|m_1}a_{d_1}\mathbb{Q}(\zeta_{d_1})\bigoplus_{d_2\mid m_2, d_2 \nmid m_3} a_{d_2}M_{|G/Z(G)|^{\frac{1}{2}}}(\mathbb{Q}(\zeta_{d_2}))\bigoplus_{d_2|m_2, d_2|m_3}(a_{d_2}-a_{d_2}')M_{|G/Z(G)|^{\frac{1}{2}}}(\mathbb{Q}(\zeta_{d_2})),$$
		where $a_{d_1}$, $a_{d_2}$ and $a_{d_2}'$ are the number of cyclic subgroups of $G/G'$ of order $d_1$, the number of cyclic subgroups of $Z(G)$ of order $d_2$ and the number of cyclic subgroups of $Z(G)/G'$ of order $d_2$, respectively.
	\end{lemma}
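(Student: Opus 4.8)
The plan is to split $\mathbb{Q}G$ into the part afforded by $\lin(G)$ and the part afforded by $\nl(G)$, exploiting the defining VZ property that every $\chi\in\nl(G)$ vanishes on $G\setminus Z(G)$ and has $\chi(1)=|G/Z(G)|^{1/2}$. First I would observe that a simple component of $\mathbb{Q}G$ is commutative if and only if it is afforded by a linear character (for $p$ odd the Schur index is $1$, so a nonlinear $\chi$ yields $M_{\chi(1)}(\mathbb{Q}(\chi))$ with $\chi(1)>1$). Hence the sum of the commutative components is exactly the image of the natural projection $\mathbb{Q}G\twoheadrightarrow\mathbb{Q}[G/G']$; since $G/G'$ is abelian, the Perlis--Walker theorem (Lemma \ref{Perlis-walker}) decomposes it as $\bigoplus_{d_1\mid m_1}a_{d_1}\mathbb{Q}(\zeta_{d_1})$, giving the first summand.

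For the nonlinear part the key step is to identify the central character and the character field. Each $\chi\in\nl(G)$ restricts as $\chi|_{Z(G)}=\chi(1)\lambda_\chi$ for a unique linear character $\lambda_\chi$ of $Z(G)$ with $G'\not\subseteq\ker\lambda_\chi$, and since $\chi$ vanishes elsewhere all its values lie in $\mathbb{Q}(\lambda_\chi)$, so $\mathbb{Q}(\chi)=\mathbb{Q}(\lambda_\chi)=\mathbb{Q}(\zeta_{d_2})$ where $d_2=o(\lambda_\chi)$. A character count in the spirit of Lemma \ref{lemma:Caminacharacter} gives $|\nl(G)|=|Z(G)|-|Z(G)/G'|$, which equals the number of linear characters of $Z(G)$ that are nontrivial on $G'$; therefore $\chi\mapsto\lambda_\chi$ is a bijection onto $\{\lambda\in\widehat{Z(G)}:G'\not\subseteq\ker\lambda\}$, and I would then verify it is equivariant for the $\gal(\mathbb{Q}^{\mathrm{ab}}/\mathbb{Q})$-action, i.e. $\lambda_{\chi^\sigma}=(\lambda_\chi)^\sigma$. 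Invoking Roquette's theorem that for a $p$-group with $p$ odd the Schur index over $\mathbb{Q}$ of every irreducible character is $1$, the simple component of $\chi$ is $M_{\chi(1)}(\mathbb{Q}(\chi))=M_{|G/Z(G)|^{1/2}}(\mathbb{Q}(\zeta_{d_2}))$.

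It then remains to count Galois orbits. By the equivariant bijection, the number of nonlinear components with centre $\mathbb{Q}(\zeta_{d_2})$ equals the number of $\gal$-orbits of order-$d_2$ characters of $Z(G)$ nontrivial on $G'$. The Perlis--Walker dictionary matches the orbits of order-$d_2$ characters of $Z(G)$ with the $a_{d_2}$ cyclic subgroups of $Z(G)$ of order $d_2$, and those trivial on $G'$ (equivalently inflated from $Z(G)/G'$) with the $a_{d_2}'$ cyclic subgroups of $Z(G)/G'$ of order $d_2$; subtracting leaves $a_{d_2}-a_{d_2}'$ orbits, with $a_{d_2}'=0$ exactly when $d_2\nmid m_3$. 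Splitting the sum over $d_2\mid m_2$ according to whether $d_2\mid m_3$ reproduces the two nonlinear summands. The main obstacle I anticipate is the bookkeeping in this final step---establishing the Galois-equivariance of $\chi\mapsto\lambda_\chi$ and translating orbit counts into the cyclic-subgroup counts $a_{d_2}$ and $a_{d_2}'$ with the correct multiplicities---rather than any isolated hard estimate.
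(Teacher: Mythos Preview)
The paper does not supply its own proof of this lemma: it is quoted verbatim as \cite[Theorem 1]{Ram} and used as a black box in the proof of Theorem~\ref{thm:WedderburnCamina}(1). So there is no in-paper argument to compare against. That said, your proposal is correct and is essentially the argument one expects for this result. It also mirrors precisely the methodology the present paper deploys for the Camina case: your Galois-equivariance step for $\chi\mapsto\lambda_\chi$ is the content of Lemma~\ref{lem:galoisCaminaCenter}, your orbit count via cyclic subgroups is Lemma~\ref{lemma:CaminaCentergaloisconjugates} (both specialised to $(G,Z(G))$ a Camina pair, which is automatic for VZ groups since $G'\subseteq Z(G)$ and every $\chi\in\nl(G)$ vanishes off $Z(G)$), and your appeal to Roquette together with the identification of the simple component is exactly Lemmas~\ref{lemma:schurindexpgroup} and~\ref{Reiner}. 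The only bookkeeping point worth flagging is that in a VZ group one has $G'\subseteq Z(G)$, so the relevant set of central characters is $\Irr(Z(G)\mid G')=\{\lambda:G'\not\subseteq\ker\lambda\}$ rather than $\Irr(Z(G))\setminus\{1\}$; you have this right, and it is what produces the subtraction $a_{d_2}-a_{d_2}'$.
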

	
	\begin{lemma}\label{lemma:WedderburnVZ 2-gp}
		Let $G$ be a VZ $2$-group. Let $m_1$, $m_2$ and $m_3$ denote the exponents of $G/G'$, $Z(G)$ and $Z(G)/G'$, respectively. Suppose $k=|\{\chi\in \nl(G) : m_{\mathbb{Q}}(\chi)=2\}|$, and $\mathbb{H}(\mathbb{Q})$ represents the standard quaternion algebra over $\mathbb{Q}$. Then the Wedderburn decomposition of $\mathbb{Q}G$ is given by
		\begin{align*}
			\mathbb{Q}G \cong &\bigoplus_{d_1\mid m_1}a_{d_1}\mathbb{Q}(\zeta_{d_1}) \bigoplus kM_{\frac{1}{2}|G/Z(G)|^{\frac{1}{2}}}(\mathbb{H}(\mathbb{Q})) \bigoplus(a_2-a_2'-k)M_{|G/Z(G)|^{\frac{1}{2}}}(\mathbb{Q})\\
			&\bigoplus_{d_2\mid m_2, d_2 \nmid m_3} a_{d_2}M_{|G/Z(G)|^{\frac{1}{2}}}(\mathbb{Q}(\zeta_{d_2})) \bigoplus_{d_2\mid m_2, d_2 \mid m_3}(a_{d_2}-a_{d_2}')M_{|G/Z(G)|^{\frac{1}{2}}}(\mathbb{Q}(\zeta_{d_2})),
		\end{align*}
		where $a_{d_1}$, $a_{l}$ and $a_{l}'$ $(l \in \{2, d_2\geq 4\})$ are the number of cyclic subgroups of $G/G'$ of order $d_1$, the number of cyclic subgroups of $Z(G)$ of order $l$ and the number of cyclic subgroups of $Z(G)/G'$ of order $l$, respectively.
	\end{lemma}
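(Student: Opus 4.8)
The plan is to mirror the proof of the odd-prime case (Lemma~\ref{lemma:Wedderburn VZ}), isolating the genuinely new input for $p=2$, namely the appearance of Schur index $2$ and the quaternion algebra $\mathbb{H}(\mathbb{Q})$. First I would recall the structural features of a VZ $p$-group $G$: one has $G' \subseteq Z(G)$, every $\chi \in \nl(G)$ vanishes off $Z(G)$, and from the orthogonality relation $\sum_{g \in Z(G)} \chi(1)^2 |\lambda(g)|^2 = |G|$ one gets $\chi(1) = |G/Z(G)|^{1/2}$, where $\chi|_{Z(G)} = \chi(1)\lambda$ for the central character $\lambda = \lambda_\chi \in \Irr(Z(G))$. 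As in the Camina-pair situation of Lemma~\ref{lemma:Caminacharacter}, the assignment $\chi \mapsto \lambda_\chi$ is a bijection from $\nl(G)$ onto the set of $\lambda \in \Irr(Z(G))$ that are nontrivial on $G'$, and since $\chi(1) \in \mathbb{Z}$ we have $\mathbb{Q}(\chi) = \mathbb{Q}(\lambda_\chi) = \mathbb{Q}(\zeta_{o(\lambda_\chi)})$.

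Next I would split $\mathbb{Q}G \cong \mathbb{Q}[G/G'] \oplus A$, where $A$ collects the simple components afforded by $\nl(G)$. The abelian part $\mathbb{Q}[G/G']$ contributes $\bigoplus_{d_1 \mid m_1} a_{d_1}\mathbb{Q}(\zeta_{d_1})$ by the Perlis--Walker theorem (Lemma~\ref{Perlis-walker}), giving the first summand. For $A$, Galois descent groups the $\chi$'s into $\gal(\mathbb{Q}(\zeta_{m_2})/\mathbb{Q})$-orbits; since $\ker \lambda = \ker \lambda^\sigma$, the property ``$\lambda$ nontrivial on $G'$'' is orbit-stable, so the number of orbits of order-$d_2$ characters nontrivial on $G'$ equals $a_{d_2} - a_{d_2}'$, exactly as in the Perlis--Walker count. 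The component attached to such an orbit is $M_{\chi(1)/m_\mathbb{Q}(\chi)}(D_\chi)$ with $Z(D_\chi) = \mathbb{Q}(\zeta_{d_2})$ and $\dim_{\mathbb{Q}(\zeta_{d_2})} D_\chi = m_\mathbb{Q}(\chi)^2$, which follows from the dimension identity $[\mathbb{Q}(\chi):\mathbb{Q}]\,\chi(1)^2 = \dim_\mathbb{Q} M_n(D_\chi)$ and forces $n = \chi(1)/m_\mathbb{Q}(\chi)$.

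The crux is the Schur-index analysis, which is where $p=2$ departs from the odd case. I would use that for a $2$-group $m_\mathbb{Q}(\chi) \in \{1,2\}$, and argue that $m_\mathbb{Q}(\chi) = 2$ forces $\mathbb{Q}(\chi) = \mathbb{Q}$: if $o(\lambda_\chi) = 2^s$ with $s \geq 2$, then $\zeta_4 \in \mathbb{Q}(\chi)$, and since $\mathbb{H}(\mathbb{Q}) \otimes_\mathbb{Q} \mathbb{Q}(\zeta_4) \cong M_2(\mathbb{Q}(\zeta_4))$ the relevant quaternion algebra splits over $\mathbb{Q}(\chi)$, whence $m_\mathbb{Q}(\chi) = 1$. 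Thus every orbit with $o(\lambda_\chi) = d_2 \geq 4$ yields $M_{|G/Z(G)|^{1/2}}(\mathbb{Q}(\zeta_{d_2}))$, and separating $d_2 \nmid m_3$ (where $a_{d_2}' = 0$) from $d_2 \mid m_3$ produces the last two sums. The remaining orbits are the order-$2$ characters, of which there are $a_2 - a_2'$; here $\mathbb{Q}(\chi) = \mathbb{Q}$, the Schur-index-$2$ division algebra is identified with $\mathbb{H}(\mathbb{Q})$, and by the definition of $k$ exactly $k$ of them satisfy $m_\mathbb{Q}(\chi) = 2$. These contribute $kM_{\frac{1}{2}|G/Z(G)|^{1/2}}(\mathbb{H}(\mathbb{Q}))$ together with $(a_2 - a_2' - k)M_{|G/Z(G)|^{1/2}}(\mathbb{Q})$, completing the decomposition.

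The main obstacle I anticipate is precisely this Schur-index step: cleanly justifying both that $m_\mathbb{Q}(\chi) \le 2$ with equality only when $\mathbb{Q}(\chi) = \mathbb{Q}$, and that the associated division algebra is the \emph{standard} quaternion algebra rather than another rational quaternion algebra. I would settle this through the Brauer--Witt description of the components as cyclotomic algebras together with local index computations at $2$ and $\infty$, the only places that can ramify for a $2$-group, pinning the division algebra down to $\left(\tfrac{-1,-1}{\mathbb{Q}}\right) = \mathbb{H}(\mathbb{Q})$; the combinatorial bookkeeping of orbits and cyclic subgroups is then routine and parallels Lemma~\ref{lemma:Wedderburn VZ}.
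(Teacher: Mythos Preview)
The paper does not supply a proof of this lemma; it is stated in Section~\ref{sec:preliminaries} as a preliminary result, evidently a companion to Lemma~\ref{lemma:Wedderburn VZ} taken from \cite{Ram}, so there is no in-paper argument to compare against. That said, your outline is the natural one and matches how such results are proved in \cite{Ram}: split off $\mathbb{Q}[G/G']$ via Perlis--Walker, use the bijection $\nl(G)\leftrightarrow\Irr(Z(G)\mid G')$ to parametrize the remaining simple components by Galois orbits of central characters, and count those orbits by cyclic subgroups of $Z(G)$ versus $Z(G)/G'$.

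One point deserves care. Your sentence ``$\mathbb{H}(\mathbb{Q})\otimes_{\mathbb{Q}}\mathbb{Q}(\zeta_4)\cong M_2(\mathbb{Q}(\zeta_4))$, hence $m_{\mathbb{Q}}(\chi)=1$ when $o(\lambda_\chi)\geq 4$'' presupposes that the division algebra $D_\chi$ is obtained from $\mathbb{H}(\mathbb{Q})$ by extension of scalars, which is exactly what has not yet been shown. The honest argument is the one you sketch at the end: by Roquette's classification (or the Brauer--Witt description of simple components of $\mathbb{Q}G$ for $2$-groups), the only non-commutative division rings that occur have totally real centre, namely $\mathbb{H}\bigl(\mathbb{Q}(\zeta_{2^t}+\zeta_{2^t}^{-1})\bigr)$; since here $Z(D_\chi)=\mathbb{Q}(\chi)=\mathbb{Q}(\zeta_{2^s})$ is totally imaginary once $s\geq 2$, one is forced to $s\leq 1$, i.e.\ $\mathbb{Q}(\chi)=\mathbb{Q}$ and $D_\chi\cong\mathbb{H}(\mathbb{Q})$. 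With that input in place your bookkeeping is correct and yields the stated decomposition; just replace the heuristic splitting argument by an appeal to Roquette (or the local-index computation you propose) so that the identification $D_\chi=\mathbb{H}(\mathbb{Q})$ is justified rather than assumed.
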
 
	We close this section by quoting some well-known results that will be also used in the upcoming sections. 
	
	\begin{lemma}\cite[Corollary 10.14]{I}\label{lemma:schurindexpgroup}
		Let $G$ be a $p$-group, and let $\chi \in \Irr(G)$. If $p$ is an odd prime, then $m_\mathbb{Q}(\chi) = 1$; otherwise, $m_\mathbb{Q}(\chi) \in \{1,2\}$.
	\end{lemma}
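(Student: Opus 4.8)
The plan is to establish the two bounds on $m_\mathbb{Q}(\chi)$ by first pinning down its prime factorization and then controlling it place-by-place via the Hasse principle for Schur indices. Write $A$ for the simple Wedderburn component of $\mathbb{Q}G$ afforded by $\chi$, so that $A \cong M_n(D)$ with $D$ a division algebra whose center is $\mathbb{Q}(\chi)$ and whose index equals $m_\mathbb{Q}(\chi)$. The standard relation between a simple component and the degree of its associated character gives $m_\mathbb{Q}(\chi) \mid \chi(1)$. Since $G$ is a $p$-group, $\chi(1)$ is a power of $p$, and hence $m_\mathbb{Q}(\chi)$ is itself a power of $p$. This single observation already reduces both assertions to bounding the $p$-part: for odd $p$ I must rule out any factor of $p$, and for $p=2$ I must rule out any factor of $4$.

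For the place-by-place analysis I would invoke the Brauer--Hasse--Noether and Benard--Schacher theory, by which the global index $m_\mathbb{Q}(\chi)$ equals the least common multiple of the local indices $m_v(\chi)$ of $D$ as $v$ ranges over the places of the center $\mathbb{Q}(\chi)$. It therefore suffices to bound each $m_v(\chi)$. At the archimedean place, the only division algebras over $\mathbb{R}$ are $\mathbb{R}$, $\mathbb{C}$, $\mathbb{H}$, so $m_\infty(\chi) \in \{1,2\}$ (equivalently, this is the Frobenius--Schur dichotomy). Combined with the first step this already settles the odd-$p$ case at $\infty$, since a power of an odd prime that is at most $2$ must equal $1$; and it is exactly the source of the potential factor $2$ when $p=2$.

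The heart of the matter is the non-archimedean places. Here I would use that the character values of a $p$-group lie in the cyclotomic field $\mathbb{Q}(\zeta_{p^k})$, where $p^k$ is the exponent of $G$ (the eigenvalues of $\rho(g)$ are $p^k$-th roots of unity), so $\mathbb{Q}(\chi) \subseteq \mathbb{Q}(\zeta_{p^k})$; by the Brauer--Witt theorem $D$ is Brauer-equivalent to a cyclotomic algebra whose defining roots of unity are $p$-power roots. At a finite prime $\ell$ the local index $m_\ell(\chi)$ is then the order of a Hasse invariant computed from the Frobenius action on these $p$-power roots of unity. The decisive structural input is that $\gal(\mathbb{Q}(\zeta_{p^k})/\mathbb{Q}) \cong (\mathbb{Z}/p^k)^\times$ is cyclic when $p$ is odd, which forces the $p$-part of every finite local invariant to cancel, whence $m_\ell(\chi)=1$ for all finite $\ell$; when $p=2$ the group $(\mathbb{Z}/2^k)^\times$ fails to be cyclic in exactly one extra factor of order $2$, which is precisely what permits a single local invariant of order $2$ but nothing larger.

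Assembling the pieces: $m_\mathbb{Q}(\chi)$ is a power of $p$, yet every local index divides $2$ (at $\infty$, for $p=2$) or equals $1$ (at all finite places, for odd $p$). For odd $p$ the least common multiple of numbers that are simultaneously powers of $p$ and bounded in this way is $1$, giving $m_\mathbb{Q}(\chi)=1$; for $p=2$ it is at most $2$, giving $m_\mathbb{Q}(\chi) \in \{1,2\}$. I expect the main obstacle to be the finite-prime step, namely making the Frobenius/Hasse-invariant computation precise and justifying the cancellation of the $p$-part from the cyclic structure of $(\mathbb{Z}/p^k)^\times$; this is the technical core of the argument (Roquette's theorem), whereas the reduction to a prime power and the archimedean bound are comparatively routine.
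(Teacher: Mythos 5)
The paper itself gives no proof of this lemma: it is quoted verbatim from Isaacs \cite[Corollary 10.14]{I}, where the argument is purely character-theoretic --- one reduces to a faithful quasi-primitive character, observes that the relevant $p$-group then has every normal abelian subgroup cyclic, and invokes the classification of such $p$-groups (for odd $p$ they are cyclic, so $\chi$ is linear and $m_\mathbb{Q}(\chi)=1$; for $p=2$ they are cyclic, dihedral, semidihedral or generalized quaternion, which yields $m_\mathbb{Q}(\chi)\leq 2$). Your local--global route through Brauer--Hasse--Noether and cyclotomic algebras is the other standard proof of Roquette's theorem (it is essentially the treatment in Yamada's book, the paper's reference \cite{Yam}), and several of your reductions are sound: $m_\mathbb{Q}(\chi)\mid\chi(1)$ forces the index to be a $p$-power; the archimedean local index lies in $\{1,2\}$, which settles the place at infinity for odd $p$ and accounts for the quaternionic possibility at $p=2$; and Benard--Schacher is not actually needed for the bound, only the equality of the global index with the least common multiple of the local ones.

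However, there is a genuine gap at the only hard point, the finite places. Your stated reason --- that the cyclicity of $\gal(\mathbb{Q}(\zeta_{p^k})/\mathbb{Q})\cong(\mathbb{Z}/p^k)^{\times}$ for odd $p$ ``forces the $p$-part of every finite local invariant to cancel'' --- is not a valid argument, and as a general principle it is false: cyclic cyclotomic extensions are precisely the maximal subfields of cyclic algebras, and these realize arbitrary indices over $\mathbb{Q}$ (for any $m$ there is a division algebra of index $m$ with maximal subfield inside $\mathbb{Q}(\zeta_q)$ for a prime $q\equiv 1 \Mod{m}$, whose Galois group is cyclic and whose invariants do not cancel). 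What the local computation actually requires is ramification data, not cyclicity of the Galois group. At a finite prime $\ell\neq p$ the extension $\mathbb{Q}_\ell(\zeta_{p^k})/\mathbb{Q}_\ell$ is unramified, every unit is a norm from an unramified extension, and the factor set of the cyclotomic algebra takes values in $p$-power roots of unity, hence in units, so the local invariant vanishes. At $\ell=p$ the extension is totally ramified, and one must show by a direct computation with local norms that $p$-power roots of unity are norms when $p$ is odd --- this is exactly where the parity of $p$ enters, and for $p=2$ the same computation leaves room for precisely one invariant of order $2$, matching the quaternion case. You correctly flag this step as the technical core, but in your write-up it is asserted rather than proved, and the heuristic you offer in its place (cyclicity) would equally ``prove'' false statements; it must be replaced by the unramified/totally-ramified norm analysis above or by Isaacs' group-theoretic reduction.
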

	
	Let $G$ be a finite group. We define an equivalence relation on $\Irr(G)$ by Galois conjugacy over $\mathbb{Q}$. Two characters $\chi, \psi \in \Irr(G)$ are said to be {\it Galois conjugates} over $\mathbb{Q}$ if $\mathbb{Q}(\chi) = \mathbb{Q}(\psi)$ and there exists $\sigma \in \gal(\mathbb{Q}(\chi) / \mathbb{Q})$ such that $\chi^\sigma = \psi$.
	
	\begin{lemma}\textnormal{\cite[Lemma 9.17]{I}}\label{SC}
		Suppose $G$ is a finite group and $\chi \in \Irr(G)$. Let $E(\chi)$ denote the Galois conjugacy class of a complex irreducible character $\chi$ over $\mathbb{Q}$. Then
		\[ |E(\chi)| = [\mathbb{Q}(\chi) : \mathbb{Q}]. \]
	\end{lemma}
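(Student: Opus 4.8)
The plan is to realize $E(\chi)$ as the orbit of $\chi$ under a natural action of a cyclotomic Galois group on $\Irr(G)$ and then apply the orbit--stabilizer theorem. First I would set $n = \exp(G)$, so that every character value $\chi(g)$, being a sum of roots of unity of order dividing $n$, lies in the cyclotomic field $\mathbb{Q}(\zeta_n)$; in particular $\mathbb{Q}(\chi) \subseteq \mathbb{Q}(\zeta_n)$. Write $\Gamma = \gal(\mathbb{Q}(\zeta_n)/\mathbb{Q})$ and, for $\sigma \in \Gamma$, define $\chi^\sigma$ by $\chi^\sigma(g) = \sigma(\chi(g))$. The first step is to confirm that $\chi^\sigma \in \Irr(G)$: applying $\sigma$ entrywise to a representation affording $\chi$ (whose entries may be taken in $\mathbb{Q}(\zeta_n)$) yields a representation with character $\chi^\sigma$, and irreducibility is preserved since $\langle \chi^\sigma, \chi^\sigma \rangle = \sigma(\langle \chi, \chi \rangle) = 1$. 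Hence $\Gamma$ acts on $\Irr(G)$.

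The crucial structural step is to identify the $\Gamma$-orbit of $\chi$ with $E(\chi)$. Because $\mathbb{Q}(\zeta_n)/\mathbb{Q}$ is abelian, every intermediate field is normal over $\mathbb{Q}$; thus $\mathbb{Q}(\chi)$ is stable under each $\sigma \in \Gamma$, and $\mathbb{Q}(\chi^\sigma) = \sigma(\mathbb{Q}(\chi)) = \mathbb{Q}(\chi)$ for all $\sigma$. Consequently every $\chi^\sigma$ satisfies the defining conditions for a Galois conjugate of $\chi$. Conversely, restriction gives a surjection $\Gamma \to \gal(\mathbb{Q}(\chi)/\mathbb{Q})$, so the $\Gamma$-orbit of $\chi$ coincides with $\{\chi^\tau : \tau \in \gal(\mathbb{Q}(\chi)/\mathbb{Q})\}$, which is precisely $E(\chi)$ as defined in the excerpt.

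It then remains to compute the stabilizer. An element $\sigma \in \Gamma$ fixes $\chi$ exactly when $\sigma(\chi(g)) = \chi(g)$ for every $g \in G$, that is, when $\sigma$ fixes $\mathbb{Q}(\chi)$ pointwise; therefore the stabilizer of $\chi$ in $\Gamma$ is $\gal(\mathbb{Q}(\zeta_n)/\mathbb{Q}(\chi))$. The orbit--stabilizer theorem together with the tower law now yields
\[
|E(\chi)| = [\,\Gamma : \gal(\mathbb{Q}(\zeta_n)/\mathbb{Q}(\chi))\,] = \frac{[\mathbb{Q}(\zeta_n):\mathbb{Q}]}{[\mathbb{Q}(\zeta_n):\mathbb{Q}(\chi)]} = [\mathbb{Q}(\chi):\mathbb{Q}],
\]
which is the desired identity.

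I expect the main obstacle to be the identification of the $\Gamma$-orbit with $E(\chi)$, the only step that is not purely formal. It hinges on the fact that subfields of cyclotomic fields are abelian, hence normal, over $\mathbb{Q}$; without this one obtains only $\mathbb{Q}(\chi^\sigma) \cong \mathbb{Q}(\chi)$ rather than the equality $\mathbb{Q}(\chi^\sigma) = \mathbb{Q}(\chi)$ demanded by the Galois-conjugacy relation, and the argument would break. The remaining steps are a routine application of orbit--stabilizer and the Galois correspondence.
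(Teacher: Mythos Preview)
Your argument is correct and is essentially the standard proof (indeed, it is the argument behind \cite[Lemma~9.17]{I}). Note, however, that the paper does not supply its own proof of this lemma: it is quoted verbatim as a preliminary result from Isaacs' book, so there is nothing in the paper to compare your approach against beyond the citation itself.
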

	
	Note that the distinct Galois conjugacy classes correspond to the distinct irreducible rational representations of $G$. Reiner \cite[Theorem 3]{IR} further classified the structure of the simple algebra appearing in the Wedderburn decomposition of the rational group algebra of a finite group $G$ associated with an irreducible rational representation of $G$. Finally, we quote Reiner's result.

	\begin{lemma}\cite[Theorem 3]{IR} \label{Reiner}
		Let $\mathbb{K}$ be an arbitrary field with characteristic zero and  $\mathbb{K}^*$ be the algebraic closure of $\mathbb{K}$. Suppose $T$ is an irreducible $\mathbb{K}$-representation of $G$, and extend $T$ (by linearity) to a $\mathbb{K}$-representation of $\mathbb{K}G$. Set
		\[A=\{T(x) : x\in \mathbb{K}G\}.\]
		Then $A$ is a simple algebra over $\mathbb{K}$, and we may write  $A= M_{n}(D)$, where $D$ is a division ring. Further,
		\begin{center}
			$Z(D)\cong \mathbb{K}(\chi_i)$ and $[D : Z(D)]=(m_\mathbb{K}(\chi_i))^2~(1\leq i \leq k),$
		\end{center}
		where $U_i$ are irreducible $K^{*}$-representations of $G$ that affords the character $\chi_i$, $T=m_{\mathbb{K}}(\chi_i)\bigoplus_{i=1}^k U_i$, $k=[\mathbb{K}(\chi_i) : \mathbb{K}]$ and $m_{\mathbb{K}}(\chi_i)$ is the Schur index of $\chi_i$ over $\mathbb{K}$.
	\end{lemma}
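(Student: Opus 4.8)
The plan is to derive the result from the Wedderburn--Artin structure theory of the semisimple algebra $\mathbb{K}G$, combined with a scalar-extension analysis that tracks how the irreducible $\mathbb{K}$-representation $T$ decomposes over the algebraically closed field $\mathbb{K}^*$. First I would note that since $\mathrm{char}\,\mathbb{K}=0$, Maschke's theorem makes $\mathbb{K}G$ semisimple, so the simple module $V$ affording $T$ is a direct summand and $A$ is exactly the image of $\mathbb{K}G$ in $\mathrm{End}_\mathbb{K}(V)$. Setting $D=\mathrm{End}_{\mathbb{K}G}(V)$, Schur's lemma shows $D$ is a division ring, and the Jacobson density theorem (using that $V$ is a faithful, finite-dimensional simple $A$-module) identifies $A$ with $\mathrm{End}_D(V)$; choosing a $D$-basis of $V$ of size $n=\dim_D V$ gives $A\cong M_n(D)$ (the passage to an opposite ring affects neither the center nor the $\mathbb{K}$-dimension, so it is harmless here). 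This settles the first assertion, and since the center of a matrix ring over $D$ is $Z(D)$, we have $Z(A)=Z(D)$; it remains to identify this field and to compute $[D:Z(D)]$.

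To identify $Z(D)=Z(A)$ I would pass to $\mathbb{K}^*$. The module $V\otimes_\mathbb{K}\mathbb{K}^*$ is a semisimple $\mathbb{K}^*G$-module whose distinct absolutely irreducible constituents $W_1,\dots,W_k$ afford characters $\chi_1,\dots,\chi_k$, each occurring with a common multiplicity $m$ (the Galois action, preserving $V$, forces equal multiplicities). Correspondingly $A\otimes_\mathbb{K}\mathbb{K}^*\cong\bigoplus_{i=1}^k M_{\chi_i(1)}(\mathbb{K}^*)$, whose center is $(\mathbb{K}^*)^k$, so $Z(A)\otimes_\mathbb{K}\mathbb{K}^*\cong(\mathbb{K}^*)^k$ and $[Z(A):\mathbb{K}]=k$. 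Now $\gal(\mathbb{K}^*/\mathbb{K})$ permutes these $k$ factors, equivalently the constituents $\chi_i$, and irreducibility of $V$ over $\mathbb{K}$ forces this action to be transitive; hence $\{\chi_1,\dots,\chi_k\}$ is a single Galois orbit, $\mathbb{K}(\chi_i)$ is independent of $i$, and exactly as in Lemma \ref{SC} one gets $k=[\mathbb{K}(\chi_i):\mathbb{K}]$. Galois descent, realized concretely by evaluating the central characters $\omega_{\chi_i}$ on $Z(\mathbb{K}G)$, then identifies the degree-$k$ field $Z(D)=Z(A)$ with $\mathbb{K}(\chi_i)$.

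For the dimension $[D:Z(D)]$ I would factor the extension as $\mathbb{K}\subseteq\mathbb{K}(\chi_i)\subseteq\mathbb{K}^*$. Over $\mathbb{K}(\chi_i)$ the algebra $A$ breaks into its $k$ Galois-conjugate blocks, each a central simple $\mathbb{K}(\chi_i)$-algebra of the form $M_n(D)$ with $D$ now central over $\mathbb{K}(\chi_i)$. Extending further to $\mathbb{K}^*$ collapses each block to $M_{nm}(\mathbb{K}^*)$, which forces $D\otimes_{\mathbb{K}(\chi_i)}\mathbb{K}^*\cong M_m(\mathbb{K}^*)$ and hence $[D:Z(D)]=m^2$. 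The integer $m$ here is the common multiplicity of the $W_i$ in $V\otimes_\mathbb{K}\mathbb{K}^*$, which directly yields the stated decomposition $T=m_\mathbb{K}(\chi_i)\bigoplus_{i=1}^k U_i$ with $U_i$ affording $\chi_i$; by the definition of the Schur index over the character field $\mathbb{K}(\chi_i)$ this multiplicity equals $m_\mathbb{K}(\chi_i)$. A dimension count ($\dim_\mathbb{K}A=n^2m^2k=\sum_{i=1}^k\chi_i(1)^2$, giving $\chi_i(1)=nm$) serves as a consistency check.

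The step I expect to be the main obstacle is the clean identification $Z(A)\cong\mathbb{K}(\chi_i)$ together with the proof that the common constituent multiplicity $m$ is the Schur index and that $\mathbb{K}^*$ splits $D$ with index exactly $m$. Both points hinge on reconciling the representation-theoretic definition of $m_\mathbb{K}(\chi_i)$ (the least multiplicity with which $\chi_i$ is realizable over its character field) with the division-algebra-index definition ($\sqrt{[D:Z(D)]}$). I would resolve this by working over the intermediate field $\mathbb{K}(\chi_i)$: there $\chi_i$ already has its full character field, $D$ is a central division algebra over $\mathbb{K}(\chi_i)$, and its Schur index is by definition its degree, so the two notions literally coincide and the factorization of the scalar extension makes the splitting explicit.
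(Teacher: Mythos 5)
The first thing to say is that the paper contains no proof of this lemma to compare against: it is quoted verbatim from Reiner \cite[Theorem 3]{IR} and used as a black box. So your attempt can only be measured against the classical argument, and in its architecture it matches that argument closely: Maschke plus Schur plus Jacobson density for $A\cong M_n(D)$ (your remark that the passage to the opposite ring is harmless is correct), base change to $\mathbb{K}^*$ to get $A\otimes_\mathbb{K}\mathbb{K}^*\cong\bigoplus_{i=1}^k M_{\chi_i(1)}(\mathbb{K}^*)$, transitivity of the Galois action on the constituents forced by irreducibility of $V$, the count $k=[\mathbb{K}(\chi_i):\mathbb{K}]$ as in Lemma \ref{SC}, and the central-character embedding of $Z(A)$ into $\mathbb{K}(\chi_i)$ (a degree-$k$ field mapping injectively into a degree-$k$ field, hence onto). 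All of these steps are sound, granting that $\mathbb{K}(\chi_i)/\mathbb{K}$ is Galois because it sits inside the abelian extension $\mathbb{K}(\zeta_{\exp G})/\mathbb{K}$ --- a fact you use implicitly and should state, since it also repairs the imprecise sentence ``over $\mathbb{K}(\chi_i)$ the algebra $A$ breaks into its $k$ Galois-conjugate blocks'': $A$ is already simple with center $F=\mathbb{K}(\chi_i)$; it is the base change $A\otimes_\mathbb{K}F$ that decomposes.

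The genuine gap is at the load-bearing step $[D:Z(D)]=m^2$. You assert that extension to $\mathbb{K}^*$ ``collapses each block to $M_{nm}(\mathbb{K}^*)$,'' but the block is $M_{\chi_i(1)}(\mathbb{K}^*)$, and the claim $\chi_i(1)=nm$ (with $m$ the constituent multiplicity) is equivalent to the theorem you are proving; likewise your dimension count $\dim_\mathbb{K}A=n^2m^2k$ presupposes $[D:Z(D)]=m^2$, so as written it is circular rather than a check. What base change actually yields is $\chi_i(1)=nm'$ where $m'=\sqrt{[D:F]}$, and nothing in your text ties $m'$ to $m$. The missing two lines are an endomorphism-ring comparison: since $\mathrm{End}_{\mathbb{K}G}(V)\otimes_\mathbb{K}\mathbb{K}^*\cong\mathrm{End}_{\mathbb{K}^*G}(V\otimes_\mathbb{K}\mathbb{K}^*)$, the isotypic decomposition gives $D\otimes_\mathbb{K}\mathbb{K}^*\cong\bigoplus_{i=1}^k M_m(\mathbb{K}^*)$, while $D\otimes_\mathbb{K}\mathbb{K}^*\cong\left(D\otimes_F\mathbb{K}^*\right)^{\oplus k}\cong M_{m'}(\mathbb{K}^*)^{\oplus k}$, whence $m=m'$ (alternatively, count $\dim_\mathbb{K}V$ two ways: $mk\chi_i(1)=knm'^2$). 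Your closing ``by definition'' reconciliation does not substitute for this: if one defines the Schur index as the index of $D$, then $[D:Z(D)]=m_\mathbb{K}(\chi_i)^2$ is tautological but the decomposition $T=m_\mathbb{K}(\chi_i)\bigoplus_{i=1}^kU_i$ becomes the unproved claim; under the usual minimal-multiplicity definition the burden shifts to the other clause. Either way the equality of the multiplicity with the index is exactly the content the $\mathrm{End}$-ring comparison supplies, and with it inserted your proof is complete and is essentially Reiner's.
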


	\section{Rational group algebra}\label{sec:RationalGroupAlgebras}
	In this section, we present the proof of Theorem \ref{thm:WedderburnCamina}. Before that, we prove some essential results required for its proof. Let $G$ be a finite group, and let $\chi, \psi \in \Irr(G)$ such that $\chi$ and $\psi$ are Galois conjugates over $\mathbb{Q}$. Then we observe that $\ker(\chi) = \ker(\psi)$. The converse also holds when $\chi, \psi \in \lin(G)$.	
In general, if $\chi, \psi \in \nl(G)$ satisfy $\ker(\chi) = \ker(\psi)$, then $\chi$ may not necessarily be Galois conjugate to $\psi$ over $\mathbb{Q}$. When $(G,Z(G))$ forms a Camina pair, we have an analogous result for certain non-linear irreducible characters of $G$.	
	\begin{lemma}\label{lem:galoisCaminaCenter}
		Let $G$ be a finite group such that $(G, Z(G))$ forms a Camina pair. Suppose $\chi, \psi \in \Irr(G | Z(G))$. Then $\chi$ and $\psi$ are Galois conjugates over $\mathbb{Q}$ if and only if $\ker(\chi) = \ker(\psi)$.
	\end{lemma}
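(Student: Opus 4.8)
The plan is to reduce the whole statement to a question about linear characters of the abelian group $Z(G)$, using the explicit bijection of Lemma~\ref{lemma:Caminacharacter}, and then to invoke the classical fact that two linear characters of an abelian group are Galois conjugate over $\mathbb{Q}$ exactly when they share the same kernel.

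The forward implication needs no hypothesis and follows from the general observation already recorded before the lemma: if $\chi^\sigma=\psi$ for some $\sigma\in\gal(\mathbb{Q}(\chi)/\mathbb{Q})$, then since $\chi(1)=\psi(1)\in\mathbb{Z}$ is fixed by $\sigma$, an element $g$ lies in $\ker(\psi)$ iff $\psi(g)=\psi(1)$ iff $\sigma(\chi(g))=\sigma(\chi(1))$ iff $\chi(g)=\chi(1)$ iff $g\in\ker(\chi)$, so $\ker(\chi)=\ker(\psi)$.

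For the converse I would write $\chi=\chi_\mu$ and $\psi=\chi_\nu$ for nontrivial $\mu,\nu\in\Irr(Z(G))$ via Lemma~\ref{lemma:Caminacharacter}, noting that $d:=|G/Z(G)|^{1/2}=\chi_\mu(1)$ is a positive integer since it is a character degree. First I would compute the kernel from the piecewise formula: $\chi_\mu(g)=0\ne d$ for $g\notin Z(G)$, while for $g\in Z(G)$ one has $\chi_\mu(g)=d\,\mu(g)=d$ iff $\mu(g)=1$; hence $\ker(\chi_\mu)=\ker(\mu)\subseteq Z(G)$, and likewise $\ker(\chi_\nu)=\ker(\nu)$. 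Thus $\ker(\chi)=\ker(\psi)$ forces $\ker(\mu)=\ker(\nu)=:K$. Since $Z(G)$ is abelian and $\mu,\nu$ are linear, $Z(G)/K$ is cyclic, say of order $m$, and $\mu,\nu$ descend to faithful characters of $Z(G)/K\cong\mathbb{Z}/m\mathbb{Z}$; consequently $\mathbb{Q}(\mu)=\mathbb{Q}(\zeta_m)=\mathbb{Q}(\nu)$, and there is $t$ with $\gcd(t,m)=1$ and $\nu=\mu^t$, realized by $\sigma_t\in\gal(\mathbb{Q}(\zeta_m)/\mathbb{Q})$ sending $\zeta_m\mapsto\zeta_m^t$.

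Finally I would transport this conjugacy back to $\chi$ and $\psi$. Applying $\sigma_t$ to the defining formula gives $\chi_\mu^{\sigma_t}=\chi_{\mu^{\sigma_t}}=\chi_{\mu^t}=\chi_\nu$, because $\sigma_t$ fixes the integer $d$ and sends $\mu(g)$ to $\mu(g)^t=\mu^t(g)$; together with $\mathbb{Q}(\chi_\mu)=\mathbb{Q}(\mu)=\mathbb{Q}(\zeta_m)=\mathbb{Q}(\chi_\nu)$ this exhibits $\chi$ and $\psi$ as Galois conjugates over $\mathbb{Q}$. The only steps requiring care are the Galois-equivariance of the correspondence $\mu\mapsto\chi_\mu$ (verifying $\chi_\mu^\sigma=\chi_{\mu^\sigma}$ directly from the formula) and the remark that $d$ is a rational integer, so that $\mathbb{Q}(\chi_\mu)=\mathbb{Q}(\mu)$; once these are in place the result follows from the description of Galois conjugacy classes of characters of cyclic groups.
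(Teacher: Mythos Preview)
Your proof is correct and follows essentially the same route as the paper: reduce to the linear characters $\mu,\nu\in\Irr(Z(G))$ via Lemma~\ref{lemma:Caminacharacter}, use that $\ker(\chi_\mu)=\ker(\mu)$ and $\mathbb{Q}(\chi_\mu)=\mathbb{Q}(\mu)$, invoke the classical fact that linear characters with equal kernel are Galois conjugate, and then transport the conjugacy back via $\chi_\mu^\sigma=\chi_{\mu^\sigma}$. You supply more detail than the paper (e.g.\ the explicit computation of $\ker(\chi_\mu)$, the cyclic-quotient argument, and the verification that the integer $d$ is fixed by $\sigma$), but the underlying argument is the same.
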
 
	
	\begin{proof}
		Suppose $\chi, \psi \in \Irr(G | Z(G))$ are Galois conjugates over $\mathbb{Q}$. It follows immediately that $\ker(\chi) = \ker(\psi)$. 
		Conversely, assume $\chi, \psi \in \Irr(G | Z(G))$ satisfy $\ker(\chi) = \ker(\psi)$. By \eqref{Caminacharacter}, there exist $\mu, \nu \in \Irr(Z(G)) \setminus \{1_{Z(G)}\}$ such that $\chi \downarrow_{Z(G)} = \mu$ and $\psi \downarrow_{Z(G)} = \nu$. Since $\ker(\chi) = \ker(\mu)$ and $\ker(\psi) = \ker(\nu)$, linear characters $\mu$ and $\nu$ are Galois conjugates. Consequently, $\mathbb{Q}(\mu) = \mathbb{Q}(\nu)$, implying the existence of $\sigma \in \operatorname{Gal}(\mathbb{Q}(\mu) / \mathbb{Q})$ such that $\mu^\sigma = \nu$. Furthermore, since $\mathbb{Q}(\chi) = \mathbb{Q}(\mu)$ and $\mathbb{Q}(\psi) = \mathbb{Q}(\nu)$, it follows that $\chi^\sigma = \psi$. Thus, $\chi$ and $\psi$ are Galois conjugates over $\mathbb{Q}$. This completes the proof.
	\end{proof}
	
	Now, suppose $G$ is a finite group and $\chi, \psi \in \Irr(G)$. We say that $\chi$ and $\psi$ are \textit{equivalent} if $\ker(\chi) = \ker(\psi)$.
	
	\begin{lemma}\label{lemma:Ayoub}\cite[Lemma 1]{Ayoub}
		Let $G$ be a finite abelian group of exponent $m$, and let $d$ divide $m$. Suppose $a_d$ denotes the number of cyclic subgroups of $G$ of order $d$. Then the number of inequivalent characters $\chi$ satisfying $\mathbb{Q}(\chi) = \mathbb{Q}(\zeta_d)$ is $a_d$.
	\end{lemma}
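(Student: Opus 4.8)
The plan is to exploit that $G$ is abelian, so every $\chi \in \Irr(G)$ is linear and its image $\chi(G)$ is a finite cyclic subgroup of $\mathbb{C}^{\times}$. First I would record the dictionary between a character and its order: if $\chi \in \Irr(G)$ has order $d$ in the dual group $\widehat{G}$, then $\chi(G)$ is exactly the group of $d$-th roots of unity, so that $\mathbb{Q}(\chi) = \mathbb{Q}(\zeta_d)$. Reading the field condition in the lemma as tracking this order (that is, identifying $\mathbb{Q}(\zeta_d)$ with the field generated by the values of a character of order exactly $d$), the characters $\chi$ with $\mathbb{Q}(\chi) = \mathbb{Q}(\zeta_d)$ are precisely the characters of order $d$. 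The point that needs care here is that $\mathbb{Q}(\zeta_d) = \mathbb{Q}(\zeta_{2d})$ when $d$ is odd, so the order of $\chi$ is not literally recoverable from the field alone; this is the main conceptual obstacle, and I would resolve it by phrasing the count in terms of the order $d$, which is harmless in the intended $p$-group applications, where the exponent is a prime power.

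Next I would translate ``inequivalent'' into kernels. By definition two characters are equivalent iff they share a kernel, so the equivalence classes of characters of order $d$ are indexed by the subgroups $N \le G$ that arise as $\ker(\chi)$ for some $\chi$ of order $d$. Since $\chi(G)$ is cyclic of order $d$, such an $N$ is exactly a subgroup with $G/N$ cyclic of order $d$; conversely, every subgroup with cyclic quotient of order $d$ is the kernel of a faithful (hence order-$d$) character of $G/N$ inflated to $G$. Thus the equivalence classes with field $\mathbb{Q}(\zeta_d)$ correspond bijectively to the subgroups $N$ with $G/N$ cyclic of order $d$, and each such class consists of precisely the $\varphi(d)$ faithful characters of the cyclic group $G/N$.

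Finally I would count. The number of characters of order $d$ equals the number of elements of order $d$ in $\widehat{G} \cong G$, namely $a_d\,\varphi(d)$, since each of the $a_d$ cyclic subgroups of order $d$ has $\varphi(d)$ generators and every element of order $d$ generates a unique such subgroup. As each equivalence class contains exactly $\varphi(d)$ characters, the number of classes is $a_d\varphi(d)/\varphi(d) = a_d$, which is the assertion. Equivalently, one can invoke Pontryagin duality: the annihilator map $N \mapsto N^{\perp}$ carries the subgroups with $G/N$ cyclic of order $d$ bijectively onto the cyclic subgroups of $\widehat{G}$ of order $d$, and $\widehat{G} \cong G$ furnishes $a_d$ of them. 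The only genuinely delicate step is the first one, aligning the field condition $\mathbb{Q}(\chi)=\mathbb{Q}(\zeta_d)$ with the order $d$ of $\chi$; once that bookkeeping is fixed, the remainder is an elementary orbit-counting argument.
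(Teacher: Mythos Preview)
The paper does not supply its own proof of this lemma; it is quoted verbatim from Ayoub--Ayoub (their Lemma~1) and used as a black box. So there is nothing to compare your argument against in the manuscript itself.

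That said, your argument is correct and is essentially the standard one. You rightly flag the one genuine subtlety: the condition $\mathbb{Q}(\chi)=\mathbb{Q}(\zeta_d)$ does not literally pin down the order of $\chi$, because $\mathbb{Q}(\zeta_d)=\mathbb{Q}(\zeta_{2d})$ for odd $d$ (e.g.\ in $C_6$ the order-$3$ and order-$6$ characters share the same field, so the literal count for $d=3$ would be $2$, not $a_3=1$). The lemma, as used here and in Ayoub, is really the statement that the equivalence classes of characters of \emph{order} $d$---equivalently, with $G/\ker(\chi)$ cyclic of order $d$---number exactly $a_d$, and that for such $\chi$ one has $\mathbb{Q}(\chi)=\mathbb{Q}(\zeta_d)$. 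Your resolution (count by order, which is unambiguous, and note the distinction is invisible for $p$-groups) is exactly right, and the counting via $\widehat G\cong G$ and $a_d\varphi(d)/\varphi(d)=a_d$, or equivalently via the annihilator bijection $N\mapsto N^{\perp}$, is clean and complete.
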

	
	Analogous to Lemma \ref{lemma:Ayoub}, we establish the following result.
	
	\begin{lemma}\label{lemma:CaminaCentergaloisconjugates}
		Let $G$ be a finite group such that $(G, Z(G))$ forms a Camina pair. Suppose $d \neq 1$ is a divisor of $\exp(Z(G))$, and let $a_d$ denote the number of cyclic subgroups of $Z(G)$ of order $d$. Then the number of inequivalent characters $\chi \in \Irr(G | Z(G))$ satisfying $\mathbb{Q}(\chi) = \mathbb{Q}(\zeta_d)$ is $a_d$.
	\end{lemma}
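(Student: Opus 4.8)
The plan is to transport the counting problem from $\Irr(G | Z(G))$ down to $\Irr(Z(G))$ using the bijection $\mu \mapsto \chi_\mu$ supplied by Lemma \ref{lemma:Caminacharacter}, and then invoke Lemma \ref{lemma:Ayoub} for the abelian group $Z(G)$. First I would record that this bijection preserves the field of character values. Indeed, by \eqref{Caminacharacter} the character $\chi_\mu$ vanishes off $Z(G)$ and equals $|G/Z(G)|^{1/2}\mu$ on $Z(G)$; since the degree $\chi_\mu(1) = |G/Z(G)|^{1/2}$ is a positive integer, the nonzero values of $\chi_\mu$ are integer multiples of values of $\mu$, whence $\mathbb{Q}(\chi_\mu) = \mathbb{Q}(\mu)$. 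In particular $\chi_\mu$ satisfies $\mathbb{Q}(\chi_\mu) = \mathbb{Q}(\zeta_d)$ exactly when $\mathbb{Q}(\mu) = \mathbb{Q}(\zeta_d)$.

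Next I would check that the bijection respects the equivalence relation (equality of kernels). Because $\chi_\mu$ is zero outside $Z(G)$ while $\chi_\mu(1) > 0$, the kernel $\ker(\chi_\mu)$ is contained in $Z(G)$; and for $g \in Z(G)$ one has $\chi_\mu(g) = \chi_\mu(1)$ iff $\mu(g) = 1$. Hence $\ker(\chi_\mu) = \ker(\mu)$, so $\chi_\mu$ and $\chi_\nu$ are equivalent precisely when $\mu$ and $\nu$ are equivalent. (Alternatively, on the $G$-side Lemma \ref{lem:galoisCaminaCenter} already identifies equivalence with Galois conjugacy, and Galois-conjugate characters share a common field of values.) Consequently the map $\mu \mapsto \chi_\mu$ induces a bijection between the equivalence classes of characters $\mu \in \Irr(Z(G)) \setminus \{1_{Z(G)}\}$ with $\mathbb{Q}(\mu) = \mathbb{Q}(\zeta_d)$ and the equivalence classes of characters $\chi \in \Irr(G | Z(G))$ with $\mathbb{Q}(\chi) = \mathbb{Q}(\zeta_d)$.

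Finally I would dispose of the excluded trivial character and apply the abelian count. Since $d \neq 1$, the trivial character $1_{Z(G)}$ has $\mathbb{Q}(1_{Z(G)}) = \mathbb{Q} \neq \mathbb{Q}(\zeta_d)$, so deleting it from $\Irr(Z(G))$ does not change the number of inequivalent characters with field of values $\mathbb{Q}(\zeta_d)$. Thus the quantity we want equals the number of inequivalent $\mu \in \Irr(Z(G))$ with $\mathbb{Q}(\mu) = \mathbb{Q}(\zeta_d)$, which by Lemma \ref{lemma:Ayoub} applied to the abelian group $Z(G)$ of exponent $\exp(Z(G))$ is exactly $a_d$, the number of cyclic subgroups of $Z(G)$ of order $d$.

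I do not anticipate a serious obstacle here; the only point requiring care is the verification that $\mu \mapsto \chi_\mu$ simultaneously preserves both the field of values and the kernel, for which the integrality of the degree factor $|G/Z(G)|^{1/2}$ is the crucial (if elementary) ingredient. Everything else is a transfer of the abelian count of Lemma \ref{lemma:Ayoub} from $Z(G)$ up to $G$.
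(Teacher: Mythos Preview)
Your proposal is correct and follows essentially the same route as the paper: transport the problem to $Z(G)$ via the bijection $\mu \mapsto \chi_\mu$ of Lemma \ref{lemma:Caminacharacter}, observe that $\ker(\chi_\mu)=\ker(\mu)$ and $\mathbb{Q}(\chi_\mu)=\mathbb{Q}(\mu)$, and then invoke Lemma \ref{lemma:Ayoub}. Your write-up is in fact more careful than the paper's, spelling out why the integer degree factor forces $\mathbb{Q}(\chi_\mu)=\mathbb{Q}(\mu)$ and why the hypothesis $d\neq 1$ lets one ignore the excluded trivial character.
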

	
	\begin{proof}
		Let $\chi \in \Irr(G | Z(G))$. Then there exists $\mu \in \Irr(Z(G)) \setminus \{1_{Z(G)}\}$ such that $\chi = \chi_\mu$ as given in \eqref{Caminacharacter}. Observe that $\ker(\chi_\mu) = \ker(\mu)$ and $\mathbb{Q}(\chi_\mu) = \mathbb{Q}(\mu)$. Furthermore, Lemma \ref{lemma:Caminacharacter} establishes a bijection between the sets $\Irr(G | Z(G))$ and $\Irr(Z(G)) \setminus \{1_{Z(G)}\}$. Consequently, by Lemma \ref{lemma:Ayoub}, the number of inequivalent characters $\chi \in \Irr(G | Z(G))$ satisfying $\mathbb{Q}(\chi) = \mathbb{Q}(\zeta_d)$ is precisely $a_d$. This completes the proof of Lemma \ref{lemma:CaminaCentergaloisconjugates}.
	\end{proof}
		
	Now, we proceed to the proof of Theorem \ref{thm:WedderburnCamina}.
		\begin{proof}[Proof of Theorem \ref{thm:WedderburnCamina}]
		Let $G$ be a Camina $p$-group of nilpotency class $r$, where $r\in \{2,3\}$ and $p$ is a prime.
		
		\begin{enumerate}
			\item Suppose $r=2$. By Lemma \ref{lemma:Caminaprop1}, both the derived subgroup $G'$ and the quotient $G/G'$ are elementary abelian $p$-groups. Since $G$ is a Camina group, every nonlinear irreducible character of $G$ vanishes on $G \setminus G'$, and since nilpotency class of $G$ is $2$, we get $G{}'\subseteq Z(G)$. This implies that $G' = Z(G)$, and hence $G$ is a VZ $p$-group. The desired results then follow from Lemmas \ref{lemma:Wedderburn VZ} and \ref{lemma:WedderburnVZ 2-gp}. This concludes the proof of Theorem \ref{thm:WedderburnCamina}(1).
			
			\item Now, consider the case when $r=3$. From Lemma \ref{lemma:Caminaprop3}, we have $Z(G) \subseteq G'$ and $G'/Z(G)$ is abelian. Furthermore, by Lemmas \ref{lemma:Caminaprop1} and \ref{lemma:Caminaprop2}, the groups $G/G'$, $G'/Z(G)$ and $Z(G)$ are all elementary abelian $p$-groups.\\
			Let $\chi \in \Irr(G)$. Suppose $\rho$ is an irreducible $\mathbb{Q}$-representation of $G$ affording the character $\Omega(\chi)$. Let $A_\mathbb{Q}(\chi)$ be the simple component in the Wedderburn decomposition of $\mathbb{Q}G$ corresponding to $\rho$, which is isomorphic to $M_q(D)$ for some $q \in \mathbb{N}$ and a division ring $D$. Since $G$ is a Camina $p$-group of nilpotency class $3$, we apply \cite[Theorem 3.1]{Macdonald2} to conclude that $p$ is an odd prime. Consequently, by Lemma \ref{lemma:schurindexpgroup}, we have $m_\mathbb{Q}(\chi) = 1$. Moreover, Lemma \ref{Reiner} ensures that $[D: Z(D)] = m_\mathbb{Q}(\chi)^2$ and $Z(D) = \mathbb{Q}(\chi)$. Thus, it follows that $D = Z(D) = \mathbb{Q}(\chi)$. Next, consider $\rho = \bigoplus_{i=1}^{l} \rho_i$, where $l = [\mathbb{Q}(\chi): \mathbb{Q}]$ and each $\rho_i$ is an irreducible complex representation of $G$ affording the character $\chi^{\sigma_i}$ for some $\sigma_i \in \mathrm{Gal}(\mathbb{Q}(\chi):\mathbb{Q})$. Since we have $m_\mathbb{Q}(\chi) = 1$, it follows from \cite[Theorem 3.3.1]{JR} that $q = \chi(1)$. \\
			From Lemmas \ref{lemma:Caminaprop2} and \ref{lemma:Caminaprop3}, the pair $(G, Z(G))$ is a Camina pair. For any $\mu \in \Irr(Z(G)) \setminus \{1_{Z(G)}\}$, let $\chi_\mu \in \Irr(G|Z(G))$ be defined as in \eqref{Caminacharacter}. Then we have $\chi_\mu(1) = |G/Z(G)|^{\frac{1}{2}}$ and $\mathbb{Q}(\chi_\mu) = \mathbb{Q}(\mu)$. Let $\rho$ be an irreducible $\mathbb{Q}$-representation of $G$ affording the character $\Omega(\chi_\mu)$. Then, as discussed earlier, we have $A_\mathbb{Q}(\chi_\mu) \cong M_{|G/Z(G)|^{\frac{1}{2}}}(\mathbb{Q}(\mu))$. Observe that $\mathbb{Q}(\chi_\mu) = \mathbb{Q}(\mu) = \mathbb{Q}(\zeta_{d})$, for some $d \mid \exp(Z(G))$. Moreover, $Z(G)$ is an elementary abelian $p$-group. Therefore, from Lemmas \ref{lem:galoisCaminaCenter} and \ref{lemma:CaminaCentergaloisconjugates}, the simple components of the Wedderburn decomposition of $\mathbb{Q}G$ corresponding to all those irreducible $\mathbb{Q}$-representations of $G$ whose kernels do not contain $Z(G)$ are
			$$\bigoplus \frac{|Z(G)|-1}{p-1} M_{|G/Z(G)|^{\frac{1}{2}}}(\mathbb{Q}(\zeta_p)).$$
			Further, by Lemma \ref{lemma:Caminaprop2}, we have $|G/G'|=p^{2n}$, $|G'/Z(G)|=p^n$ and $|G/Z(G)|=p^{3n}$ for some even integer $n$. Since $(G, G')$ is a Camina pair and $Z(G) \leq G' \leq G$, it follows that $(G/Z(G), G'/Z(G))$ is also a Camina pair. By Lemma \ref{lemma:Caminaprop3}, we have $Z(G/Z(G))=G'/Z(G)=(G/Z(G))'$, implying that $G/Z(G)$ is a Camina $p$-group of nilpotency class $2$, where $p$ is an odd prime. For $\chi \in \Irr(G) \setminus \Irr(G|Z(G))$, there exists $\bar{\chi} \in \Irr(G/Z(G))$ such that $\bar{\chi}$ lifts to $\chi$. Moreover, there is a bijection between the sets $\Irr(G) \setminus \Irr(G|Z(G))$ and $\Irr(G/Z(G))$. Note that $\bar{\chi}(1)=[G/Z(G): Z(G/Z(G))]^{\frac{1}{2}}=[G/Z(G): G{}'/Z(G)]^{\frac{1}{2}}=p^n=|G{}'/Z(G)|$. Therefore, from Theorem \ref{thm:WedderburnCamina}(1), the simple components of the Wedderburn decomposition of $\mathbb{Q}G$ corresponding to all those irreducible $\mathbb{Q}$-representations of $G$ whose kernels contain $Z(G)$ are
			$$ \mathbb{Q} \bigoplus \frac{|G/G'|-1}{p-1}\mathbb{Q}(\zeta_p) \bigoplus \frac{|G'/Z(G)|-1}{p-1}M_{|G'/Z(G)|}(\mathbb{Q}(\zeta_p)).$$ 
			By combining these results, we conclude the proof of Theorem \ref{thm:WedderburnCamina}(2).
		\end{enumerate}
		This completes the proof of Theorem \ref{thm:WedderburnCamina}.
	\end{proof}
	
	Next, we present the proof of Corollary \ref{cor:isoCamina}.
	\begin{proof}[Proof of  Corollary \ref{cor:isoCamina}]
		Suppose that $G$ and $H$ are two isoclinic, non-abelian Camina $p$-groups of the same order, where $p$ is a prime. By the definition of isoclinism, we have $G' \cong H'$ and $G/Z(G) \cong H/Z(H)$. Consequently, applying Theorem \ref{thm:WedderburnCamina} yields the desired result. This concludes the proof of Corollary \ref{cor:isoCamina}.
	\end{proof}

\begin{remark}\label{rem:extraspecial} \textnormal{It is a well known fact that for each positive integer $n$ there are exactly two (up to isomorphism) extraspecial $p$-groups of order $p^{2n+1}$ and they are isoclinic. Moreover, it is easy to verify that extraspecial $p$-groups are Camina $p$-groups of nilpotency class $2$. Hence, from Corollary \ref{cor:isoCamina}, their rational group algebras are isomorphic. Further, from Theorem \ref{thm:WedderburnCamina}, if $G$ is an extraspecial $p$-group with an odd prime $p$, then  
$$\mathbb{Q}G \cong \mathbb{Q} \oplus (p^{2n-1}+p^{2n-2}+\cdots+p+1)\mathbb{Q}(\zeta_p) \oplus M_{p^n}(\mathbb{Q}(\zeta_p)).$$}
\end{remark}

 Note that if two groups are isoclinc, then their rational group algebras need not be isomorphic. We conclude this section with the following Corollary which provides an example of a isoclinic family of non-extraspecial $p$-groups with isomorphic rational group algebras. 
	\begin{corollary}\label{cor:p-gpisogpalg}
		Let $G$ be a non-abelian $p$-group of order $p^6$ such that $G$ belongs to the isoclinic family $\Phi_{15}$ (see \cite{NewmanO`Brien}), where $p$ is an odd prime. Then 
			$$\mathbb{Q}G \cong \mathbb{Q} \oplus (p^3+p^2+p+1)\mathbb{Q}(\zeta_p) \oplus (p+1)M_{p^2}(\mathbb{Q}(\zeta_p)).$$
	\end{corollary}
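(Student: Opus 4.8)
The plan is to identify the groups in the family $\Phi_{15}$ as Camina $p$-groups of nilpotency class $2$ with explicit invariants, and then to substitute those invariants into Theorem \ref{thm:WedderburnCamina}(1). First I would record what the target decomposition forces. Matching the claimed formula against Theorem \ref{thm:WedderburnCamina}(1), Case ($p\neq 2$), one reads off $\frac{|G/Z(G)|-1}{p-1}=p^3+p^2+p+1=\frac{p^4-1}{p-1}$ and $\frac{|Z(G)|-1}{p-1}=p+1=\frac{p^2-1}{p-1}$, i.e. $|G/Z(G)|=p^4$ and $|Z(G)|=p^2$, which is consistent with $|G|=p^6$. So it suffices to prove that every $G\in\Phi_{15}$ is a Camina $p$-group of class $2$ with $G'=Z(G)$ of order $p^2$ and $|G/Z(G)|=p^4$, since the matrix size is then $|G/Z(G)|^{1/2}=p^2$.

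Next I would rule out class $3$ on arithmetic grounds, so that the Camina property alone forces class $2$. By Lemma \ref{lemma:Caminaprop2} a Camina $p$-group of class $3$ satisfies $|G/G'|=p^{2n}$ and $|G'/Z(G)|=p^{n}$ with $n$ even, whence $n\geq 2$ and $|G|=|G/G'|\,|G'/Z(G)|\,|Z(G)|\geq p^{4}\cdot p^{2}\cdot p=p^{7}>p^{6}$; thus no Camina $p$-group of order $p^6$ has class $3$. Consequently, once a member of $\Phi_{15}$ is known to be Camina, it is automatically of class $2$, and then $G'=Z(G)$ exactly as in the proof of Theorem \ref{thm:WedderburnCamina}(1).

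The Camina condition and the required orders should then be established by isoclinism together with a single representative. Both $|G'|$ and $|G/Z(G)|$ are isoclinism invariants (the latter because $G/Z(G)$ is, and for groups of the common order $p^6$ this also pins down $|Z(G)|=|G|/|G/Z(G)|$), while the class-$2$ Camina condition ``$[G,g]=G'$ for every $g\notin Z(G)$'' depends only on the commutator pairing $G/Z(G)\times G/Z(G)\to G'$, which is preserved under isoclinism. Hence it is enough to check, for one representative of $\Phi_{15}$ taken from the presentations in \cite{NewmanO`Brien}, that it has class $2$, that $G'=Z(G)$ with $|Z(G)|=p^2$ and $|G/Z(G)|=p^4$, and that the Camina condition holds; the whole family then inherits these properties. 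Finally, feeding $|G/Z(G)|^{1/2}=p^2$ and $|Z(G)|=p^2$ into Theorem \ref{thm:WedderburnCamina}(1), Case ($p\neq 2$), gives
$$\mathbb{Q}G\cong\mathbb{Q}\oplus\frac{p^4-1}{p-1}\mathbb{Q}(\zeta_p)\oplus\frac{p^2-1}{p-1}M_{p^2}(\mathbb{Q}(\zeta_p))=\mathbb{Q}\oplus(p^3+p^2+p+1)\mathbb{Q}(\zeta_p)\oplus(p+1)M_{p^2}(\mathbb{Q}(\zeta_p)),$$
which is the asserted isomorphism.

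The hard part will be the third step: confirming directly from the presentation of a representative of $\Phi_{15}$ that the Camina condition $[G,g]=G'$ holds for every noncentral $g$ (equivalently, by Lemma \ref{lemma:prop of Camina pair}, that every $\chi\in\nl(G)$ vanishes off $Z(G)$), and that $|Z(G)|=|G'|=p^2$. Everything after that is bookkeeping against Theorem \ref{thm:WedderburnCamina}, and the non-extraspecial nature of the family is visible from $|Z(G)|=p^2>p$.
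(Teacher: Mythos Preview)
Your proposal is correct and follows essentially the same route as the paper: identify that any $G\in\Phi_{15}$ is a Camina $p$-group of class $2$ with $G'=Z(G)$ of order $p^2$ and $|G/Z(G)|=p^4$, then plug into Theorem~\ref{thm:WedderburnCamina}(1)(a). The paper's proof is terser: it simply reads off $G'=Z(G)\cong C_p\times C_p$ and $G/Z(G)\cong C_p^4$ from \cite{NewmanO`Brien}, asserts the Camina property, and applies the theorem, without your intermediate steps of reverse-engineering the invariants, arithmetically excluding class $3$, or transferring the Camina condition across the isoclinic family via the commutator pairing.
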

	\begin{proof}
Let $G$ be a group of order $p^6$ such that $G \in \Phi_{15}$. Then $G'=Z(G)\cong C_p\times C_p$ and $G/Z(G)\cong C_p \times C_p \times C_p \times C_p$ (see \cite{NewmanO`Brien}). Moreover, $G$ is a Camina $p$-group of nilpotency class $2$. Hence, the result follows from Theorem \ref{thm:WedderburnCamina}. 
	\end{proof}

	\section{Primitive central idempotents}\label{sec:pci}
	In this section, we first provide a brief introduction to primitive central idempotents in rational group algebras, followed by the proof of Theorem \ref{thm:pciCamina}. Let $G$ be a finite group. An element $e$ in $\mathbb{Q}G$ is called an idempotent if it satisfies $e^2 = e$. A primitive central idempotent $e$ in $\mathbb{Q}G$ is an idempotent that lies in the center of $\mathbb{Q}G$ and cannot be expressed as a sum of two nonzero orthogonal idempotents, i.e., $e = e' + e''$ with $e'e'' = 0$.
	
	It is well known that a complete set of primitive central idempotents of $\mathbb{Q}G$ determines its decomposition into a direct sum of simple subalgebras. Specifically, if $e$ is a primitive central idempotent of $\mathbb{Q}G$, then the corresponding simple component of $\mathbb{Q}G$ is given by $\mathbb{Q}Ge$. For a character $\chi \in \Irr(G)$, the idempotent
	\begin{equation*}
		e(\chi) := \frac{\chi(1)}{|G|} \sum_{g \in G} \chi(g) g^{-1}
	\end{equation*}
	defines a primitive central idempotent in $\mathbb{C}G$. Furthermore, the set $\{e(\chi) : \chi \in \Irr(G)\}$ forms a complete set of primitive central idempotents of $\mathbb{C}G$. Additionally, for $\chi \in \Irr(G)$, we define
	\begin{equation*}
		e_{\mathbb{Q}}(\chi) := \sum_{\sigma \in \text{Gal}(\mathbb{Q}(\chi) / \mathbb{Q})} e(\chi^{\sigma}).
	\end{equation*}
	This yields a primitive central idempotent in $\mathbb{Q}G$.
	
	For a subset $X$ of $G$, define
	\begin{equation*}
		\widehat{X} := \frac{1}{|X|} \sum_{x \in X} x \in \mathbb{Q}G.
	\end{equation*}
	For a normal subgroup $N$ of $G$, define
	\begin{equation*}
		\epsilon(G, N) := \begin{cases}
			\widehat{G}, & \text{if } G = N; \\
			\prod_{D/N \in M(G/N)} (\widehat{N} - \widehat{D}), & \text{otherwise},
		\end{cases}
	\end{equation*}
	where $M(G/N)$ denotes the set of minimal nontrivial normal subgroups $D/N$ of $G/N$, with $D$ being a subgroup of $G$ containing $N$.
	
	Here, we compute a complete set of primitive central idempotents of the rational group algebra of a Camina $p$-group. We begin with a general result.
	
	\begin{lemma}\cite[Lemma 3.3.2]{JR}\label{lemma:pcilin}
		Let $G$ be a finite group, and let $\chi \in \lin(G)$ with $N=\ker(\chi)$. Then we have the following.
		\begin{enumerate}
			\item $e_{\mathbb{Q}}(\chi)= \epsilon(G, N)$.
			\item $\mathbb{Q}G\epsilon(G, N) \cong \mathbb{Q}(\zeta_{|G/N|})$. 
		\end{enumerate}
	\end{lemma}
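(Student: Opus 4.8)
The plan is to prove both parts by expanding $\epsilon(G,N)$ and $e_{\mathbb{Q}}(\chi)$ as explicit sums of the complex primitive central idempotents $e(\psi)$, $\psi \in \Irr(G)$, and then reading off the resulting simple component. First I would record the structural consequences of $\chi$ being linear: since $\chi \in \lin(G)$ has kernel $N$, it induces a \emph{faithful} linear character $\bar\chi$ of $G/N$, so $G/N$ is cyclic of order $m := |G/N|$ and $\mathbb{Q}(\chi) = \mathbb{Q}(\bar\chi) = \mathbb{Q}(\zeta_m)$. As noted just before the statement, two linear characters are Galois conjugate over $\mathbb{Q}$ precisely when they share a kernel; hence the Galois conjugacy class of $\chi$ consists of exactly the linear characters of $G$ whose kernel equals $N$ (equivalently, the faithful linear characters of $G/N$), and therefore $e_{\mathbb{Q}}(\chi) = \sum_{\ker\psi = N} e(\psi)$, the sum running over $\psi \in \lin(G)$.

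For part (1) the main computation is to show that $\epsilon(G,N)$ equals this same sum. The key tool is the identity $\widehat{H} = \sum_{\psi\in\Irr(G),\, H\subseteq\ker\psi} e(\psi)$, valid for any normal subgroup $H$, which holds because $\widehat H$ is the central idempotent with $\mathbb{Q}G\widehat H \cong \mathbb{Q}(G/H)$ (equivalently, $\rho_\psi(\widehat H)$ is the identity when $H \subseteq \ker\psi$ and $0$ otherwise, by Frobenius reciprocity applied to the permutation character of $G/H$). Consequently $\widehat N - \widehat D = \sum_{N\subseteq\ker\psi,\, D\not\subseteq\ker\psi} e(\psi)$, and since the $e(\psi)$ are orthogonal idempotents, multiplying these factors over all $D/N \in M(G/N)$ yields
\[
\epsilon(G,N) = \sum_{\psi} e(\psi),
\]
where $\psi$ ranges over the characters with $N\subseteq\ker\psi$ for which $\ker(\bar\psi)$ contains no minimal nontrivial normal subgroup of $G/N$. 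Because $G/N$ is abelian, every nontrivial subgroup contains one of prime order (a minimal normal subgroup), so this condition forces $\ker(\bar\psi) = 1$, i.e.\ $\ker\psi = N$. Thus $\epsilon(G,N) = \sum_{\ker\psi = N} e(\psi) = e_{\mathbb{Q}}(\chi)$, proving (1); the count $\phi(m) = [\mathbb{Q}(\zeta_m):\mathbb{Q}]$ of such $\psi$ is consistent with Lemma \ref{SC}.

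For part (2), the simple component $\mathbb{Q}G\epsilon(G,N) = \mathbb{Q}Ge_{\mathbb{Q}}(\chi)$ is the Wedderburn component attached to the rational irreducible representation containing $\chi$. Since $\chi$ is linear, $\chi(1) = 1$ and $m_{\mathbb{Q}}(\chi) = 1$, so Lemma \ref{Reiner} gives $\mathbb{Q}G\epsilon(G,N) \cong M_1(D)$ with $D = Z(D) = \mathbb{Q}(\chi) = \mathbb{Q}(\zeta_m)$; alternatively, since $\epsilon(G,N)$ is supported on $G/N \cong C_m$, this is exactly the $\mathbb{Q}(\zeta_m)$-component singled out by the faithful characters in the Perlis--Walker decomposition (Lemma \ref{Perlis-walker}) of $\mathbb{Q}(G/N)$. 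Either way, $\mathbb{Q}G\epsilon(G,N) \cong \mathbb{Q}(\zeta_{|G/N|})$.

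I expect the only genuine obstacle to be the bookkeeping in part (1): establishing the idempotent identity for $\widehat H$ and then correctly translating the condition ``$\ker\bar\psi$ meets no minimal normal subgroup of $G/N$'' into ``$\ker\psi = N$''. Everything else reduces to the elementary structure of faithful characters of a cyclic group together with Reiner's theorem.
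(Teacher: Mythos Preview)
The paper does not supply its own proof of this lemma; it is quoted verbatim from \cite[Lemma 3.3.2]{JR} and used as a black box. Your argument is correct and is essentially the standard proof one finds in that reference: write $\widehat{H}=\sum_{H\subseteq\ker\psi}e(\psi)$, multiply the factors $\widehat{N}-\widehat{D}$ using orthogonality of the $e(\psi)$, and observe that since $G/N$ is cyclic the surviving $\psi$ are exactly the linear characters with $\ker\psi=N$, which is the Galois orbit of $\chi$; part (2) then follows from Lemma \ref{Reiner} (or Perlis--Walker) as you say.
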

	
	Now, we are ready to prove Theorem \ref{thm:pciCamina}.
	\begin{proof}[Proof of Theorem \ref{thm:pciCamina}]
		Let $G$ be a Camina $p$-group of nilpotency class $r$, where $r\in \{2,3\}$ and $p$ is a prime. Let $\chi \in \nl(G)$ with $N=\ker(\chi)$.
		
		\begin{enumerate}
			\item Assume that $G$ is a Camina $p$-group of nilpotency class $2$. In this case, $G$ is a VZ $p$-group. Hence, there exists $\mu \in \Irr(Z(G)|G')$ such that
			\begin{equation*}
				\chi = \chi_\mu(g) = \begin{cases}
					|G/Z(G)|^{1/2} \mu(g) & \text{if } g \in Z(G), \\
					0 & \text{otherwise.}
				\end{cases}
			\end{equation*}
			Moreover, we note that $e(\chi) = e(\chi_\mu) = e(\mu)$. Thus, we have
			\begin{align*}
				e_{\mathbb{Q}}(\chi) &= e_{\mathbb{Q}}(\chi_\mu) \\
				&= \sum_{\sigma \in \operatorname{Gal}(\mathbb{Q}(\chi_\mu) / \mathbb{Q})} e(\chi_\mu^{\sigma}) \\
				&= \sum_{\sigma \in \operatorname{Gal}(\mathbb{Q}(\mu) / \mathbb{Q})} e(\chi_{\mu^{\sigma}}) \\
				&= \sum_{\sigma \in \operatorname{Gal}(\mathbb{Q}(\mu) / \mathbb{Q})} e(\mu^{\sigma}) \\
				&= e_{\mathbb{Q}}(\mu) \\
				&= \epsilon(Z(G), N),
			\end{align*}
			where $N = \ker(\mu) = \ker(\chi_\mu) = \ker(\chi)$ (see Lemma \ref{lemma:pcilin}).\\
			If $p$ is an odd prime, Theorem \ref{thm:WedderburnCamina} states that the simple subalgebra of $\mathbb{Q}G$ generated by $e_{\mathbb{Q}}(\chi)$ is given by
			\begin{equation*}
				\mathbb{Q}G e_{\mathbb{Q}}(\chi) = \mathbb{Q}G\epsilon(Z(G), N) \cong M_{|G/Z(G)|^{1/2}}(\mathbb{Q}(\zeta_p)).
			\end{equation*}
			For $p=2$ and $m_\mathbb{Q}(\chi) = 1$, Theorem \ref{thm:WedderburnCamina} yields
			\begin{equation*}
				\mathbb{Q}G e_{\mathbb{Q}}(\chi) = \mathbb{Q}G\epsilon(Z(G), N) \cong M_{|G/Z(G)|^{1/2}}(\mathbb{Q}(\zeta_p)).
			\end{equation*}
			When $p=2$ and $m_\mathbb{Q}(\chi) = 2$, Theorem \ref{thm:WedderburnCamina} implies that
			\begin{equation*}
				\mathbb{Q}G e_{\mathbb{Q}}(\chi) = \mathbb{Q}G\epsilon(Z(G), N) \cong M_{(1/2)|G/Z(G)|^{1/2}}(\mathbb{H}(\mathbb{Q})).
			\end{equation*}
			
			\item Now, suppose $G$ is a Camina $p$-group of nilpotency class $3$. Then $p$ is necessarily an odd prime (see \cite[Theorem 3.1]{Macdonald2}). Consider $\chi \in \nl(G)$ such that $Z(G) \subseteq N$. Let $\bar{\chi} \in \Irr(G/Z(G))$ be the character that lifts to $\chi$. Since $G/Z(G)$ is a Camina $p$-group of nilpotency class $2$ and $Z(G/Z(G)) = G'/Z(G)$ (see Lemma \ref{lemma:Caminaprop3}), it follows that
			\begin{align*}
				e_{\mathbb{Q}}(\bar{\chi}) &= \epsilon(Z(G/Z(G)), N/Z(G)) \\
				&= \epsilon(G'/Z(G), N/Z(G)),
			\end{align*}
	where $\ker(\bar{\chi})=N/Z(G)$. Further, as $\chi$ is the lift of $\bar{\chi}$ to $G$, and hence from the above discussion, we deduce that $e_{\mathbb{Q}}(\chi) = \epsilon(G', N)$. Moreover, by Theorem \ref{thm:WedderburnCamina}, the simple subalgebra of $\mathbb{Q}G$ generated by $e_{\mathbb{Q}}(\chi)$ satisfies
			\begin{equation*}
				\mathbb{Q}G e_{\mathbb{Q}}(\chi) = \mathbb{Q}G\epsilon(G', N) \cong M_{|G'/Z(G)|}(\mathbb{Q}(\zeta_p)).
			\end{equation*}
			Lastly, consider $\chi \in \nl(G)$ such that $Z(G) \nsubseteq N$. Then there exists a unique $\mu \in \Irr(Z(G)) \setminus \{1_{Z(G)}\}$ such that
			\begin{equation*}
				\chi_\mu(g) = \begin{cases}
					|G/Z(G)|^{1/2} \mu(g) & \text{if } g \in Z(G), \\
					0 & \text{otherwise,}
				\end{cases}
			\end{equation*}
			(see \eqref{Caminacharacter}). The conclusions then follow analogously from the proof of Theorem \ref{thm:pciCamina}(1) and Theorem \ref{thm:WedderburnCamina}.
		\end{enumerate}
		This completes the proof of Theorem \ref{thm:pciCamina}.
	\end{proof}

	\section{Acknowledgments}
	Ram Karan acknowledges University Grants Commission, Government of India.


\begin{thebibliography}{99}
		\bibitem{Ayoub} R. G. Ayoub and C. Ayoub, On the group rings of a finite abelian group, Bull. Austral. Math. Soc. {\bf 1} (1969) 245-261.
		\bibitem{BM14} G. K. Bakshi and S. Maheshwary, The rational group algebra of a normally monomial group, J. Pure Appl. Algebra {\bf 218}(9) (2014), 1583–1593.
		\bibitem{BGO} G.~K. Bakshi, J. Garg and G. Olteanu, Rational group algebras of generalized strongly monomial groups: primitive idempotents and units, Math. Comp. {\bf 93}(350) (2024), 3027--3058.
		\bibitem{Camina} A. R. Camina, Some conditions which almost characterize Frobenius groups, Israel J. Math. {\bf 31} (1978), 153-160. 
		\bibitem{Ram} R. K. Choudhary and S. K. Prajapati, Rational representations and rational group algebra of VZ $p$-groups, J. Aust. Math. Soc., {\bf 118}(1) (2025), 1-30.
		\bibitem{Dark} R. Dark and C. M. Scoppola, On Camina groups of prime power order, J. Algebra, {\bf 181} (1996), 787-802.
			
		\bibitem{Herman} A. Herman, On the automorphism group of rational group algebras of metacyclic groups, Comm. Algebra {\bf 25}(7) (1997), 2085–2097.
		\bibitem{I} I. M. Isaacs, Character Theory of Finite Groups (Dover Publications Inc., New York, 1994).
		\bibitem{Isaac-Lewis} I.~M. Isaacs and M.~L. Lewis, Camina $p$-groups that are generalized Frobenius complements, Arch. Math. (Basel) {\bf 104} (5) (2015), 401--405.
		\bibitem{Jes-Rio} E. Jespers and Á. del Río, A structure theorem for the unit group of the integral group ring of some finite groups, J. Reine Angew. Math. {\bf 521} (2000), 99–117.
		\bibitem{Jes-Lea-Paq} E. Jespers, G. Leal and A. Paques, Central idempotents in rational group algebras of finite nilpotent groups, J. Algebra Appl. {\bf 2}(1) (2003), 57–62.
		\bibitem{Jes-Olt-Rio} E. Jespers, G. Olteanu and \'A. del~R\'io, Rational group algebras of finite groups: from idempotents to units of integral group rings, Algebr. Represent. Theory {\bf 15}(2) (2012), 359--377.
		\bibitem{JR} E. Jespers and Á. del Río, Group Ring Groups, Volume 1: Orders and Generic Constructions of Units (De Gruyter, Berlin, 2016).
		\bibitem{MLL3} M. L. Lewis, On $p$-group Camina pairs, J. Group Theory {\bf 15}(2012), 469-483.
		\bibitem{Macdonald}  I. D. Macdonald, Some $p$-groups of Frobenius and extra-special type, Israel J. Math. {\bf 40} (1981), 350–364.
		\bibitem{Macdonald2}  I. D. Macdonald, More on p-groups of Frobenius type, Israel J. Math. {\bf 56} (1986), 335–344.
		\bibitem{Mann 1} A. Mann and C.~M. Scoppola, On $p$-groups of Frobenius type, Arch. Math. (Basel) {\bf 56}(4) (1991), 320--332.
		\bibitem{Mattarei} S. Mattarei, Character Tables and Metabelian groups, J. London Math. Soc.(2) {\bf 46}(1) (1992), 92–100.
       \bibitem{NewmanO`Brien} M. F. Newman, E. A. O’Brien and M. R. Vaughan-Lee, Presentations for the groups of order $p^6$ for prime $p\geq7$, \href{https://arxiv.org/abs/2302.02677}{arXiv:2302.02677 [math.GR]}		
		\bibitem{Olt07} G. Olteanu. Computing the Wedderburn decomposition of group algebras by the Brauer-Witt theorem, Math. Comp. {\bf 76}(258) (2007), 1073–1087.
		\bibitem{PW} S. Perlis and G. L. Walker, Abelian group algebras of finite order, Trans. Amer. Math. Soc., {\bf 68}(3) (1950), 420-426.
		\bibitem{SKP} S. K. Prajapati, M. R. Darafsheh and M. Ghorbani, Irreducible characters of $p$-group of order $\leq p^5$, Algebras and Representation Theory {\bf 20} (2017), 1289-1303.
		\bibitem{IR} I. Reiner, The Schur index in the theory of group representataions, Michigan Mat. J., {\bf 8} (1961), 39-47.
		\bibitem{Rit-Seh} J. Ritter and S. K. Sehgal, Construction of units in integral group rings of finite nilpotent groups, Trans. Amer. Math. Soc. {\bf 324}(2) (1991), 603–621.	
		\bibitem{Yam} T. Yamada, The Schur Subgroup of the Brauer Group, Lecture Notes in Mathematics, 397 (Springer Verlag, Berlin, 1974).
		
	\end{thebibliography}
\end{document}